\numberwithin{equation}{section}
\numberwithin{figure}{section}
\theoremstyle{plain}
\newtheorem{thm}{Theorem}[section]
\theoremstyle{definition}
\newtheorem{defn}[thm]{Definition}
\theoremstyle{definition}
\newtheorem{example}[thm]{Example}
\theoremstyle{remark}
\newtheorem{claim}[thm]{Claim}
\theoremstyle{remark}
\newtheorem{rem}[thm]{Remark}
\theoremstyle{plain}
\newtheorem{lem}[thm]{Lemma}
\theoremstyle{plain}
\newtheorem{prop}[thm]{Proposition}
\theoremstyle{plain}
\newtheorem{cor}[thm]{Corollary}
\begin{document}

\title{Universal Enveloping Algebras of PBW Type}

\author{Alessandro Ardizzoni}

\curraddr{University of Ferrara, Department of Mathematics, Via Machiavelli
35, Ferrara, I-44121, Italy}

\email{alessandro.ardizzoni@unife.it}

\urladdr{http://www.unife.it/utenti/alessandro.ardizzoni}

\subjclass[2000]{Primary 16W30; Secondary 16S30}

\thanks{This paper was written while the author was member of GNSAGA with
partial financial support from MIUR within the National Research Project
PRIN 2007}

\begin{abstract}
We continue our investigation of the general notion of universal enveloping
algebra introduced in [A. Ardizzoni, \emph{A Milnor-Moore
Type Theorem for Primitively Generated Braided Bialgebras}, J. Algebra \textbf{327} (2011), no. 1, 337--365]. Namely we study when such an algebra is of PBW type, meaning that a suitable PBW type
theorem holds. We discuss the problem of finding a basis for a universal enveloping
algebra of PBW type: As an application we recover  the PBW basis both
of an ordinary universal enveloping algebra and of a restricted enveloping
algebra. We prove that a universal enveloping algebra is of PBW type if and only if it is cosymmetric. We characterize braided bialgebra liftings
of Nichols algebras as universal enveloping algebras of PBW type.
\end{abstract}

\keywords{Braided bialgebras, braided Lie algebras, universal enveloping algebras,
PBW theorem.}

\maketitle
\tableofcontents

\section{Introduction}

Let $L$ be a Lie algebra which is assumed to have a totally ordered
basis $(X,\leq)$. A classical result from the theory of Lie algebras
asserts that the elements $x_{1}\cdots x_{n}$, where $n\geq1$, $x_{i}\in X$,
for all $1\leq i\leq n$, and $x_{1}\leq x_{2}\leq\cdots\leq x_{n}$,
along with $1$, form a basis of the universal enveloping algebra
$U(L)$ of $L$. This theorem is due to Poincaré, Birkhoff and Witt
and the basis is called the PBW basis of the universal enveloping
algebra (see e.g. \cite[Corollary C, page 92]{Humphreys}). This result
essentially relies on the existence of a bialgebra map $\omega:S\left(L\right)\rightarrow\mathfrak{G}\left(U(L)\right)$,
where $S(L)$ denotes the symmetric algebra on $L$ and $\mathfrak{G}\left(U(L)\right)$
is the graded braided bialgebra associated to the standard filtration of $U(L)$.
The fact that the map $\omega$ is bijective sometimes is called the
PBW theorem, see e.g. \cite[Corollary C, page 92]{Humphreys}.

Motivated by these observations, we intend to investigate a PBW type
Theorem for a general notion of universal enveloping algebra appeared
in \cite[Definition 5.2]{Ardizzoni-MMPrim}. To explain better this
notion we need to recall the definition of braided bialgebra.

Recall that a \emph{braided vector space} $(V,c)$ consists of a vector
space $V$ and a $K$-linear map $c:V\otimes V\rightarrow V\otimes V$,
called braiding, obeying the so-called quantum Yang-Baxter equation
$c_{1}c_{2}c_{1}=c_{2}c_{1}c_{2}$. Here $c_{1}=c\otimes V$ and $c_{2}=V\otimes c.$
A \emph{braided bialgebra} is then a braided vector space which is
both an algebra and a coalgebra with structures suitably compatible
with the braiding. Examples of braided bialgebras are all bialgebras
in those braided monoidal categories which are monoidal subcategories
of the category of vector spaces.

In \cite[Theorem 6.9]{Ardizzoni-MMPrim}, it is proved that every
primitively generated braided bialgebra is isomorphic, as a braided
bialgebra, to the generalized universal enveloping algebra $U(V,c,b)$
of its infinitesimal braided Lie algebra $(V,c,b)$ (here $b$ denotes
the bracket on the braided vector space $(V,c)$ consisting of primitive
elements in the given braided bialgebra). This result can be seen
as an extension of the celebrated Milnor-Moore Theorem \cite[Theorem 5.18]{Milnor-Moore}
for cocommutative connected bialgebras (once observed that such a
bialgebra is always primitively generated): in characteristic zero,
any cocommutative connected bialgebra is the enveloping algebra of
its space of primitive elements, regarded as a Lie algebra in a canonical
way. 

Now, in order to investigate a PBW type Theorem for $U(V,c,b)$ we
have to choose an appropriate substitute for the symmetric algebra.
Since in the classical case such an algebra is obtained as the universal
enveloping algebra of $L$ regarded as a Lie algebra through the trivial
bracket, the natural candidate is the Nichols algebra $\mathcal{B}\left(V,c\right)$
which is indeed of the form $U(V,c,b_{tr})$, where $b_{tr}$ denotes
the trivial bracket on $(V,c)$.\medskip{}

\textbf{The results in Detail.} The paper is organized as follows.

Section \ref{sec:prelim} contains preliminary facts and notations
that will be used in the paper.

Section \ref{sec:StandardCorad}, deals with the standard filtration
$(U_{(n)})_{n\in\mathbb{N}}$ on $U:=U(V,c,b)$. This filtration is
induced by the standard filtration on the braided tensor algebra $T(V,c)$
(the latter is just the tensor algebra $T(V)$ which is regarded as
a braided bialgebra through a comultiplication depending on the braiding
$c$). Having in mind the classical case, we consider the graded braided
bialgebra $\mathfrak{G}(U)$ associated to the standard filtration.
Denote by $\mathrm{gr}U$ the graded braided bialgebra associated
to the coradical filtration of $U$. In Proposition \ref{pro:CoradicalFiltration},
we produce a graded braided bialgebra homomorphism $\xi_{U}:\mathfrak{G}\left(U\right)\rightarrow\mathrm{gr}U$
and we characterize when this morphism is bijective. In Proposition
\ref{pro: induced filtration} and Proposition \ref{pro:omegaB},
we show there exist canonical graded braided bialgebra homomorphisms
$\vartheta_{U}:T\left(V,c\right)\rightarrow\mathfrak{G}\left(U\right)$
and $\chi_{U}:\mathcal{B}\left(V,c\right)\rightarrow\mathrm{gr}U$
such that the diagram

\begin{equation}
\xymatrix@R=10pt{T(V,c)\ar[rr]^{\Omega}\ar[d]_{\vartheta_{U}} &  & \mathcal{B}\left(V,c\right)\ar[d]^{\chi_{U}}\\
\mathfrak{G}(U)\ar[rr]^{\xi_{U}} &  & \mathrm{gr}U}
\label{diag:chi}\end{equation}
commutes, where $\Omega$ denotes the canonical projection. Moreover
$\vartheta_{U}$ is surjective and $\chi_{U}$ is injective. 

We say that $U$ is of \emph{PBW type} whenever $\vartheta_{U}$ quotients
to a braided bialgebra isomorphism $\omega_{U}:\mathcal{B}\left(V,c\right)\rightarrow\mathfrak{G}\left(U\right)$:
\begin{equation}
\xymatrix@R=10pt{T(V,c)\ar[rr]^{\Omega}\ar[rd]_{\vartheta_{U}} &  & \mathcal{B}\left(V,c\right)\ar@{.>}[ld]^{\omega_{U}}\\
 & \mathfrak{G}(U)}
\label{diag:omega}\end{equation}
Therefore $U$ is of PBW type means that $U$ fulfills a PBW type
Theorem. In Theorem \ref{thm:comb1PBW}, we prove that $U\left(V,c,b\right)$
is always of PBW type whenever $\left(V,c\right)$ has combinatorial
rank at most one in the sense of \cite[Definition 5.4]{Kharchenko-SkewPrim}. 

Section \ref{sec:PBWbasis} deals with the problem of determining
a basis for a universal enveloping algebra. In Proposition \ref{pro:PBWbasis},
mimicking classical ideas, we find a criterion that helps to obtain
a basis of $U=U(V,c,b)$ knowing a suitable basis of $\mathcal{B}\left(V,c\right)$
in case $U$ is of PBW type. This criterion is applied in Example
\ref{exa:PBWbasis-char0} and Example \ref{exa:PBWbasis-charp} to
recover the PBW basis both of an ordinary universal enveloping algebra
and of a restricted enveloping algebra. In Example \ref{exa:PBWbasis-Stumbo},
we compute a PBW basis for the universal enveloping algebra of a braided
Lie algebra whose bracket $c$ is not a symmetry i.e. $c^{2}\neq\mathrm{Id}$.

Section \ref{sec:Cosymm}, is devoted to the investigation of universal
enveloping algebras which are cosymmetric in the sense of \cite[Definition 3.1]{Kharchenko- connected}.
In Theorem \ref{teo:PBW}, using the results in \cite{Kharchenko- connected},
we give several characterizations of the fact that $U$ is of PBW type:
in particular this is equivalent to require that $U$ is cosymmetric.
In Theorem \ref{thm:KhaCosym}, we provide a sufficient condition
to have that $U$ is cosymmetric. This result is used in Corollary
\ref{cor:KhaCosym} to get that a braided vector space has combinatorial
rank at most $n+1$ whenever the corresponding symmetric algebra of
rank $n$ is cosymmetric. As a result, in Example \ref{ex:Cosymm},
we exhibit a universal enveloping algebra $U$ which is not of PBW
type.

In Section \ref{sec:Lifting}, we investigate braided bialgebra liftings
of the Nichols algebra. Explicitly, given a braided vector space $\left(V,c\right)$,
we say that a braided bialgebra $B$ is a lifting of\textbf{ $\mathcal{B}\left(V,c\right)$}
if there is a graded braided bialgebra isomorphism $\mathcal{B}\left(V,c\right)\cong\mathrm{gr}B$.
Theorem \ref{thm:Lifting} characterizes braided bialgebra liftings
of Nichols algebras as universal enveloping algebras of PBW type.
This result is applied in Corollary \ref{cor:Lifting} to Nichols
algebras algebras of a braided vector space of combinatorial rank
at most one. \medskip{}

\textbf{On PBW type theorems.} Several attempts to extend the classical PBW results to more general contexts appeared in the literature. Some of them, such as \cite{Yamane-APBWtheo, Rosso-AnAnaloguePBW, Lusztig-CanonicBasis, Ringel-PBW quantumGrps}, are related the quantized enveloping algebras $U_q(\mathfrak{g})$ of Drinfeld and Jimbo (note that this enveloping algebra is pointed but not connected whence it can not be described as a universal enveloping algebra of our kind). We now list some PBW type results which are closer to our approach, see also the references therein.

\begin{itemize}
\item A PBW Theorem for connected braided Hopf algebras with involutive braidings was obtained in \cite[Theorem 7.1]{Kharchenko- connected}. Our result can be seen as an extension of this one to the non-symmetric case.

\item A PBW Theorem for quadratic algebras can be found in \cite{Berger-ThequantumPBWThm} and in \cite{Braverman-Gaitsgory}. See also \cite[Theorem 3.9]{Ardizzoni-Stumbo-Qlie}.

\item Deep results on the PBW basis are obtained in \cite[Theorem 2]{Kherchenko-Aquantum} and more generally in \cite[Theorem 34]{Ufer}) for braided vector spaces of diagonal type or left triangular respectively. See also the more recent paper \cite{Helbig-APBWcriterion}.
\end{itemize}
We would like to point out that our aim here is not to compute explicitly a PBW basis for the Nichols algebra associated to a braided vector space. Instead we will give a method to produce a basis for a universal enveloping algebra $U(V,c,b)$ of PBW type once known a basis for the Nichols algebra $\mathcal{B}\left(V,c\right)$, see Remark \ref{rem:PBWbasis}.

\section{Preliminaries\label{sec:prelim}}

Throughout this paper $K$ will denote a field. All vector spaces
will be defined over $K$ and the tensor product over $K$ will be
denoted by $\otimes$.\medskip{}
 \\
 In this section we recall the main notions that we will deal with
in the paper. 
\begin{defn}
Let $V$ be a vector space over a field $K$. A $K$-linear map $c=c_{V}:V\otimes V\rightarrow V\otimes V$
is called a \textbf{braiding} if it satisfies the quantum Yang-Baxter
equation $c_{1}c_{2}c_{1}=c_{2}c_{1}c_{2}$ on $V\otimes V\otimes V$,
where we set $c_{1}:=c\otimes V$ and $c_{2}:=V\otimes c.$ The pair
$\left(V,c\right)$ will be called a\textbf{ braided vector space}.
A morphism of braided vector spaces $(V,c_{V})$ and $(W,c_{W})$
is a $K$-linear map $f:V\rightarrow W$ such that $c_{W}(f\otimes f)=(f\otimes f)c_{V}.$ 
\end{defn}
A general method for producing braided vector spaces is to take an
arbitrary braided category $(\mathcal{M},\otimes,K,a,l,r,c),$ which
is a monoidal subcategory of the category $\mathrm{Vect}_{K}$ of
$K$-vector spaces (here $a,l,r$ denote the associativity, the left
and the right unit constraints respectively). Hence any object $V\in\mathcal{M}$
can be regarded as a braided vector space with respect to $c:=c_{V,V}$,
where $c_{X,Y}:X\otimes Y\rightarrow Y\otimes X$ denotes the braiding
in $\mathcal{M}$, for all $X,Y\in\mathcal{M}$.

Let $\mathcal{N}$ be either the category of comodules over a coquasitriangular
Hopf algebra or the category of Yetter-Drinfeld modules over a Hopf
algebra with bijective antipode. Then the forgetful functor $F:\mathcal{N}\rightarrow\mathrm{Vect}_{K}$
is a strict monoidal functor. Hence $\mathcal{M}=\mathrm{Im}F$ is
an example of a category as above. 
\begin{defn}
\cite{Ba} A quadruple $(A,m,u,c)$ is called a \textbf{braided algebra}
if $(A,m,u)$ is an associative unital algebra, $(A,c)$ is a braided
vector space, $m$ and $u$ commute with $c$, that is the following
conditions hold:\begin{gather*}
c(m\otimes A)=(A\otimes m)(c\otimes A)(A\otimes c),\qquad c(A\otimes m)=(m\otimes A)\left(A\otimes c\right)(c\otimes A),\\
c(u\otimes A)=A\otimes u,\qquad c(A\otimes u)=u\otimes A.\end{gather*}
A morphism of braided algebras is, by definition, a morphism of ordinary
algebras which, in addition, is a morphism of braided vector spaces.
Similarly the notions of braided coalgebra and of morphism of braided
coalgebras is introduced. 

\cite[Definition 5.1]{Ta} A sextuple $(B,m,u,\Delta,\varepsilon,c)$
is a called a \textbf{braided bialgebra} if $(B,m,u,c)$ is a braided
algebra, $(B,\Delta,\varepsilon,c)$ is a braided coalgebra and the
following relation hold:\[
\Delta m=(m\otimes m)(B\otimes c\otimes B)(\Delta\otimes\Delta).\]

\end{defn}
Examples of the notions above are algebras, coalgebras and bialgebras
in any braided category $\mathcal{M}$ which is a monoidal subcategory
of $\mathrm{Vect}_{K}$. The notion of braided bialgebra admits a
graded counterpart which is called graded braided bialgebra. For further
results on this topic the reader is refereed to \cite[1.8]{AMS-MM2}.
\begin{example}
\label{ex: tensor algebra} Let $\left(V,c\right)$ be a braided vector
space. Consider the tensor algebra $T=T(V)$ with multiplication $m_{T}$
and unit $u_{T}$. This is a graded braided algebra with $n$th graded
component $T^n(V)=V^{\otimes n}$. The braiding $c_{T}$ on $T$ is defined using the the braiding $c$ of
$V$ (the graded component $c_T^{n,m}$ of $c_T$ is represented in Figure \ref{Fig:cT}, where each crossing stands for a copy of $c$).

\begin{figure}[h]\includegraphics[width=4cm,height=4cm]{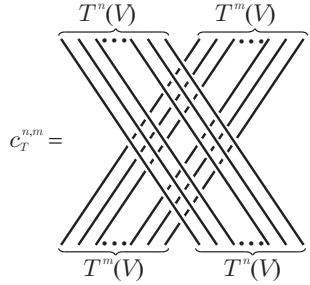}\caption{The braiding $c_T$}\label{Fig:cT}
\end{figure}

Now $T\otimes T$ becomes itself an algebra with multiplication $m_{T\otimes T}:=\left(m_{T}\otimes m_{T}\right)\left(T\otimes c_{T}\otimes T\right).$
This algebra is denoted by $T\otimes_{c}T.$ The universal property
of the tensor algebra yields two algebra homomorphisms $\Delta_{T}:T\rightarrow T\otimes_{c}T$
and $\varepsilon_{T}:T\rightarrow K$. It is straightforward to check
that $(T,m_{T},u_{T},\Delta_{T},\varepsilon_{T},c_{T})$ is a graded
braided bialgebra. Note that $\Delta_{T}$ really depends on $c$.
For example, one has $\Delta_{T}\left(z\right)=z\otimes1+1\otimes z+\left(c+\mathrm{Id}\right)\left(z\right)$,
for all $z\in V\otimes V$. \end{example}
\begin{defn}
The graded braided bialgebra described in Example \ref{ex: tensor algebra}
is called the \textbf{braided tensor algebra} and will be denoted
by $T(V,c)$. \end{defn}
\begin{claim}
\label{nn: connected} Recall that a coalgebra $C$ is called \textbf{connected}
if the coradical $C_{0}$ of $C$ (i.e the sum of all simple subcoalgebras
of $C$) is one-dimensional. In this case there is a unique group-like
element $1_{C}\in C$ such that $C_{0}=K1_{C}$. A morphism of connected
coalgebras is just a coalgebra homomorphisms (clearly it preserves
the group-like element).

By definition, a braided coalgebra $\left(C,c\right)$ is \textbf{connected}
if the underlying coalgebra is connected and, for any $x\in C$, $c(x\otimes1_{C})=1_{C}\otimes x$
and $c(1_{C}\otimes x)=x\otimes1_{C}$. \end{claim}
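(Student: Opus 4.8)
The only parts of the statement that carry mathematical content are the existence of a unique group-like element $1_C$ in a connected coalgebra $C$, together with the equality $C_0 = K1_C$, and the fact that a coalgebra homomorphism of connected coalgebras sends $1_C$ to $1_D$; the rest is a definition. So the plan is simply to spell out these standard facts.

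For existence, I would use that the coradical is always a subcoalgebra of $C$ and, being one-dimensional here, is itself a simple subcoalgebra, say $C_0 = Kg$. Writing $\Delta g = a\,(g\otimes g)$ and $\varepsilon(g) = b$ with $a,b \in K$, the counit axiom applied to $g$ gives $ab\,g = g$, hence $ab = 1$ and in particular $b \neq 0$. Then $1_C := b^{-1}g$ satisfies $\varepsilon(1_C) = 1$ and $\Delta(1_C) = (ab)(1_C \otimes 1_C) = 1_C \otimes 1_C$, so $1_C$ is group-like and $C_0 = K1_C$. For uniqueness, if $h \in C$ is group-like then $Kh$ is a nonzero one-dimensional, hence simple, subcoalgebra, so $Kh \subseteq C_0 = K1_C$; writing $h = \lambda 1_C$ and applying $\varepsilon$ forces $\lambda = 1$, whence $h = 1_C$.

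Finally, for a coalgebra homomorphism $f \colon C \to D$ between connected coalgebras, compatibility with comultiplication and counit gives $\Delta_D(f(1_C)) = f(1_C) \otimes f(1_C)$ and $\varepsilon_D(f(1_C)) = 1$, so $f(1_C)$ is a group-like element of $D$, hence equals $1_D$ by the uniqueness just established. There is no genuine obstacle: the only point requiring a bit of care is that one-dimensionality of $C_0$ is used twice, once to normalize the chosen generator into a group-like element and once to see that every group-like element of $C$ lies in $C_0$. For the braided version there is nothing to prove, it being a definition.
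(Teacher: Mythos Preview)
Your argument is correct. Note, however, that in the paper this item is not followed by any proof: it is a numbered recall of standard terminology, and the embedded assertions (existence and uniqueness of $1_C$, preservation under coalgebra maps) are taken for granted as folklore. So there is nothing to compare against; you have simply supplied the routine verification that the paper omits, and done so accurately.
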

\begin{defn}
1) Let $B$ be a braided bialgebra with comultiplication $\Delta$
and unit $1_{B}$. Consider the space \[
P\left(B\right)=\left\{ b\in B\mid\Delta\left(b\right)=b\otimes1_{B}+1_{B}\otimes b\right\} \]
 of primitive elements in $B$. By \cite[Lemma 2.10]{Ardizzoni-Universal},
the braiding of $B$ induces a braiding $c_{P}$ of $P$ that will
be called the \textbf{infinitesimal braiding of} $B$. The braided
vector space $(P,c_{P})$ will be called the \textbf{infinitesimal
part of} $B$ (see \cite[Definition 2.11]{Ardizzoni-Universal}).

2) A braided bialgebra $B$ is called \textbf{primitively generated}
if it is generated as an algebra by $P\left(B\right)$. See \cite[page 239]{Milnor-Moore}.\end{defn}
\begin{rem}
\label{rem:ConnIsHopf}1) Let $B$ be a primitively generated braided
bialgebra. Then the underlying braided coalgebra is connected (see
\cite[Proposition 5.8]{Ardizzoni-Universal}). 

2) Let $B$ be a connected braided bialgebra. Since $B_{0}=K1_{B},$
then $\mathrm{Id}_{B}$ is convolution invertible in $\mathrm{Hom}(B_{0},B)$.
In view of the Takeuchi's result \cite[Lemma 5.2.10]{Montgomery},
we conclude that $\mathrm{Id}_{B}$ is convolution invertible in $\mathrm{Hom}(B,B)$.
Hence $B$ has an antipode i.e. it is a braided Hopf algebra (this
reasoning is similar to \cite[Remark 9.17]{Ardizzoni-SFFS} which
is due to Masuoka). In particular, by 1), any primitively generated
braided bialgebra is indeed a braided Hopf algebra. \end{rem}
\begin{defn}\label{def:Lie}
\cite[Definitions 5.2 and 5.4]{Ardizzoni-MMPrim} Let $\left(V,c\right)$
be a braided vector space. Suppose that, for each $n\in\mathbb{N}$,
there are a braided bialgebra $U^{[n]}$ and a map $i^{\left[n\right]}:V\rightarrow P\left(U^{\left[n\right]}\right)$
that fulfil the following requirements, where we set $P^{\left[n\right]}:=P\left(U^{\left[n\right]}\right)$
and $V^{\left[n\right]}:=i^{\left[n\right]}\left(V\right)$. 
\begin{itemize}
\item $U^{\left[0\right]}:=T\left(V,c\right)$ and $i^{\left[0\right]}:V\rightarrow P^{[0]}$
is the restriction of the canonical map $V\rightarrow U^{\left[0\right]}$. 
\item For each $n\in\mathbb{N}$, there exists a map $b^{\left[n\right]}:P^{[n]}\rightarrow V$
such that \begin{eqnarray}
c_{V^{\left[n\right]}}\left(i^{\left[n\right]}b^{\left[n\right]}\otimes V^{\left[n\right]}\right) & = & \left(V^{\left[n\right]}\otimes i^{\left[n\right]}b^{\left[n\right]}\right)c_{P^{\left[n\right]},V^{\left[n\right]}},\label{form:bracket1}\\
c_{V^{\left[n\right]}}\left(V^{\left[n\right]}\otimes i^{\left[n\right]}b^{\left[n\right]}\right) & = & \left(i^{\left[n\right]}b^{\left[n\right]}\otimes V^{\left[n\right]}\right)c_{V^{\left[n\right]},P^{\left[n\right]}},\label{form:bracket2}\end{eqnarray}
 where $c_{V^{\left[n\right]}}:V^{[n]}\otimes V^{[n]}\rightarrow V^{[n]}\otimes V^{[n]}$
and $c_{P^{\left[n\right]},V^{\left[n\right]}},:P^{[n]}\otimes V^{[n]}\rightarrow V^{[n]}\otimes P^{[n]}$
denote the restrictions of the braiding of $P^{[n]}$.
\item For each $n\in\mathbb{N}$,\[
U^{\left[n+1\right]}=\frac{U^{\left[n\right]}}{\left(\left[\mathrm{Id}-i^{\left[n\right]}b^{\left[n\right]}\right]\left[P^{\left[n\right]}\right]\right)},\]
 $\pi_n^{n+1}:U^{[n]}\rightarrow U^{[n+1]}$ is the natural projection of $U^{[n]}$ onto $U^{[n+1]}$ and $i^{\left[n+1\right]}:V\rightarrow P^{[n+1]}$ is the restriction
of the canonical map $V\rightarrow U^{\left[n+1\right]}$, so that $i^{[n+1]}=P(\pi_n^{n+1})\circ i^{[n]}$, where $P(\pi_n^{n+1}):P^{[n]}\rightarrow P^{[n+1]}$ is the map induced by $\pi_n^{n+1}$.
\end{itemize}
In this case we will say that $b:=\left(b^{\left[n\right]}\right)_{n\in\mathbb{N}}$\textbf{
}is a \textbf{bracket for a braided vector space $\left(V,c\right)$}.
This way we get a direct system of braided bialgebras \[
U^{\left[0\right]}\overset{\pi_{0}^{1}}{\rightarrow}U^{\left[1\right]}\overset{\pi_{1}^{2}}{\rightarrow}U^{\left[2\right]}\overset{\pi_{2}^{3}}{\rightarrow}\cdots.\]
 The direct limit of this direct system will be denoted by $\left(U\left(V,c,b\right),\pi_{U}\right):=\left(U^{\left[\infty\right]},\pi_{n}^{\infty}\right),$ where $\pi_{U}:T(V,c)\rightarrow U\left(V,c,b\right)$ is the canonical projection.
Now $U\left(V,c,b\right)$ becomes a primitively generated braided
bialgebra which will be called \textbf{the} \textbf{universal
enveloping algebra of} $\left(V,c,b\right).$ Denote by $i_{U}:V\rightarrow U\left(V,c,b\right)$
the (not necessarily injective) canonical map and set $P^{\left[\infty\right]}:=P\left(U^{\left[\infty\right]}\right).$
Note that $\mathrm{Im}\left(i_{U}\right)\subseteq P^{\left[\infty\right]}$
so that $i_{U}$ induces a morphism of braided vector spaces $i^{\left[\infty\right]}:V\rightarrow P^{\left[\infty\right]}$.

We say that $\left(V,c,b\right)$ is a \textbf{braided Lie algebra}
whenever $\left(V,c\right)$ is a braided vector space, $b$ is a
bracket on $\left(V,c\right)$ and $i_{U}:V\rightarrow U\left(V,c,b\right)$
is injective.

Let $B$ be a braided bialgebra. By \cite[Theorem 6.5]{Ardizzoni-MMPrim},
the infinitesimal part $\left(P,c_{P}\right)$ of $B$ is endowed
with a specific bracket $b_{P}$ such that $\left(P,c_{P},b_{P}\right)$
is a braided Lie algebra. $\left(P,c_{P},b_{P}\right)$ will be called
the \textbf{infinitesimal braided Lie algebra of} $B.$ \end{defn}
\begin{rem}
\label{rem:Milnor-Moore} 1) Let $\left(V,c,b\right)$ be a braided
Lie algebra and set $U:=U\left(V,c,b\right)$. In view of \cite[Corollary 5.6]{Ardizzoni-MMPrim},
the map $i_{U}:V\rightarrow U$ induces an isomorphism between $V$
and $P\left(U\right)$. 

2) By \cite[Theorem 6.9]{Ardizzoni-MMPrim}, every primitively generated
braided bialgebra is isomorphic as a braided bialgebra to the universal
enveloping algebra of its infinitesimal braided Lie algebra.\end{rem}

\begin{rem}
Let $(V,c,b)$ be a braided Lie algebra.
When $c$ is a symmetry, i.e. $c^{2}=\mathrm{Id}_{V\otimes
V}, $ and the characteristic of $K$ is zero, our universal enveloping algebra $U(V,c,b)$ reduces
to the one introduced in \cite{Gu- Gen Trans Lie} (cf. \cite[Remark 6.4]{Ardizzoni-Universal} using Remark \ref{rem:Universal} below). Other notions of Lie
algebra and universal enveloping algebra, extending the ones in \cite{Gu- Gen Trans Lie} to the non-symmetric case, appeared in the literature. Let us mention some of them without any pretension of exhaustiveness.

\begin{itemize}
\item Lie algebras for braided vector spaces $\left( V,c\right) $ where $c$
is a braiding of Hecke type \cite[Definition 7.1]{Wa}. Compare with \cite[Section 6]{Ardizzoni-Universal} using  Remark \ref{rem:Universal} to see how our notion of universal enveloping algebra behaves in this setting.


\item Lie algebras for braided vector spaces $\left( V,c\right) $ where $c$
is constructed by means of braidings of Hecke type
\cite[Definition 1]{Gurevich: Hecke sym}. See \cite[Remark 3.6]{Ardizzoni-Stumbo-Qlie} using  Remark \ref{rem:Universal}.

\item Braided Lie algebras for objects $L$ in a braided or quasi-tensor category equipped with a coproduct, a counit and a bracket $[,]\colon L\otimes L\rightarrow L$ satisfying some axioms, \cite[Definition 4.1]{Majid-QuantumBrLiealg}. Here the universal enveloping algebra is defined as the quotient of the tensor algebra over $L$ modulo some quadratic relations.

\item Quantum Lie algebras for objects $\mathfrak{g}$ in a monoidal category equipped with a braiding $\sigma:\mathfrak{g}\otimes \mathfrak{g}\rightarrow \mathfrak{g}\otimes \mathfrak{g}$ and a bracket $[,]\colon \mathfrak{g}\otimes \mathfrak{g}\rightarrow \mathfrak{g}$ satisfying some axioms, see \cite[Definition 2.1]{Gomez-Majid-BraidedLie}, which naturally arises in the context of covariant differential calculus over quantum groups \cite[Theorems 5.3 and 5.4]{Woronowicz-DifCalcCompactMatr}.

\item Lie algebras for objects in the braided monoidal category of
Yetter-Drinfeld modules over a Hopf algebra with bijective antipode \cite[%
Definition 4.1]{Pareigis-On Lie}. Compare with \cite[Section 8]{Ardizzoni-Universal} using  Remark \ref{rem:Universal}.

\item Lie algebras defined by considering quantum operations (see \cite[%
Definition 2.2]{Kharchenko-AnAlgSkew}) as primitive polynomials in the
tensor algebra. When the underline braided vector space is an object in the category of Yetter-Drinfeld modules over some group algebra, to any Lie algebra of this kind, Kharchenko associates a universal
enveloping algebra which is not connected (see \cite{Kharchenko-SkewPrim}). Note that the universal
enveloping algebra we deal with in the present paper is connected as it is meant to describe the structure of primitively generated (whence connected)
braided bialgebras over $K$, see 2) in Remark \ref{rem:Milnor-Moore}.
\end{itemize}
\end{rem}

\begin{defn}
\label{def:SymAlg}In view of \cite[Example 7.1]{Ardizzoni-MMPrim},
any braided vector space $\left(V,c\right)$ can be endowed with the
so-called \textbf{trivial bracket} $b_{tr}$, which makes of $\left(V,c,b_{tr}\right)$
a braided Lie algebra. Set $U:=U\left(V,c,b_{tr}\right)$. Then $U^{[n]}$
is called the \textbf{symmetric algebra of rank $n$ of }$\left(V,c\right)$
and is denoted by $S^{[n]}$. It is the braided bialgebra $S^{\left[n\right]}\left(B\right)$
introduced in \cite[Definition 3.10]{Ardizzoni-Sdeg} in the case
$B=T(V,c)$. Explicitly, $S^{[0]}=T(V,c)$ and, for all $n>0$, $S^{[n]}$
is the graded braided bialgebra obtained dividing out $S^{[n-1]}$
by the two-sided ideal generated by the homogeneous primitive elements
in $S^{[n-1]}$ of degree at least two. Moreover $U^{[\infty]}=U$ is denoted by $S^{[\infty]}$ and
identifies with the \textbf{Nichols algebra} $\mathcal{B}\left(V,c\right)$
(see \cite[5.3]{Ardizzoni-Sdeg} for a different definition). We will
denote by $\Omega:T(V,c)\rightarrow\mathcal{B}\left(V,c\right)$ the
canonical projection. 

If there exists a least $n\in\mathbb{N}$ such that $S^{[n]}=S^{[\infty]}$,
then we will say that $(V,c)$ has \textbf{combinatorial rank $n$}
(cf. \cite[Definition 5.4]{Kharchenko-SkewPrim}, see also \cite[Section 5]{Ardizzoni-Sdeg}). 
\end{defn}

We include here a technical result about $P^{[n]}$ that will be used to prove Theorem \ref{thm:KhaCosym}.
\begin{lem}\label{lem:Psplits} 
 Let $\left(V,c,b\right)$ be a braided Lie algebra. Let $P^{\left[n\right]}$ and $V^{\left[n\right]}$ be the spaces introduces in Definition \ref{def:Lie}. Set $W^{\left[n\right]}:=\mathrm{Ker}\left(\pi_{n}^{n+1}\right)\cap P^{\left[n\right]}$. Then $W^{\left[n\right]}=\mathrm{Ker}\left(b^{\left[n\right]}\right)=\mathrm{Im}\left(\mathrm{Id}_{P^{\left[n\right]}}-i^{\left[n\right]}b^{\left[n\right]}\right)$
and $P^{\left[n\right]}=W^{\left[n\right]}\oplus V^{\left[n\right]}$,
as a direct sum of braided subspaces.
\end{lem}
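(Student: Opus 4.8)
The plan is to read the entire statement off the endomorphism $e:=i^{[n]}b^{[n]}$ of $P^{[n]}$ and its complement $\mathrm{Id}_{P^{[n]}}-e$, whose image is by construction the subspace of relations imposed in passing from $U^{[n]}$ to $U^{[n+1]}$. First I would record two normalizations forced by the hypothesis that $(V,c,b)$ is a braided Lie algebra. Since $i_{U}\colon V\to U(V,c,b)$ is injective and factors through each $i^{[n]}$, every $i^{[n]}$, and in particular $i^{[n+1]}$, is injective. Next, for $v\in V$ the element $(\mathrm{Id}_{P^{[n]}}-i^{[n]}b^{[n]})(i^{[n]}v)=i^{[n]}(v-b^{[n]}i^{[n]}v)$ lies in the ideal defining $U^{[n+1]}$, hence in $\mathrm{Ker}(\pi_{n}^{n+1})$; since $\pi_{n}^{n+1}\circ i^{[n]}=i^{[n+1]}$ and $i^{[n+1]}$ is injective, this forces $b^{[n]}i^{[n]}=\mathrm{Id}_{V}$. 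It follows that $e$ is idempotent, that $b^{[n]}$ is surjective, that $\mathrm{Im}(e)=i^{[n]}(V)=V^{[n]}$ and $\mathrm{Ker}(e)=\mathrm{Ker}(b^{[n]})$ (the latter by injectivity of $i^{[n]}$), and --- because $e$ is idempotent --- that $\mathrm{Im}(\mathrm{Id}_{P^{[n]}}-i^{[n]}b^{[n]})=\mathrm{Ker}(e)=\mathrm{Ker}(b^{[n]})$ and $P^{[n]}=V^{[n]}\oplus\mathrm{Ker}(b^{[n]})$ as vector spaces.

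Second, I would identify $W^{[n]}=\mathrm{Ker}(\pi_{n}^{n+1})\cap P^{[n]}$ with $\mathrm{Ker}(b^{[n]})$. One inclusion is immediate: $\mathrm{Ker}(b^{[n]})=\mathrm{Im}(\mathrm{Id}_{P^{[n]}}-i^{[n]}b^{[n]})$ sits inside the defining ideal of $U^{[n+1]}$, hence inside $\mathrm{Ker}(\pi_{n}^{n+1})$, and inside $P^{[n]}$. Conversely, given $p\in W^{[n]}$, write $p=e(p)+(\mathrm{Id}_{P^{[n]}}-e)(p)$; the second summand lies in $\mathrm{Ker}(\pi_{n}^{n+1})$, so $0=\pi_{n}^{n+1}(p)=\pi_{n}^{n+1}(i^{[n]}b^{[n]}(p))=i^{[n+1]}(b^{[n]}(p))$, and therefore $b^{[n]}(p)=0$ by injectivity of $i^{[n+1]}$. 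Combined with the first step, this gives the displayed equalities $W^{[n]}=\mathrm{Ker}(b^{[n]})=\mathrm{Im}(\mathrm{Id}_{P^{[n]}}-i^{[n]}b^{[n]})$ together with the vector-space decomposition $P^{[n]}=W^{[n]}\oplus V^{[n]}$.

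Finally I would promote this to a decomposition of braided subspaces. That $V^{[n]}=i^{[n]}(V)$ is a braided subspace is clear since $i^{[n]}$ is a morphism of braided vector spaces; moreover, the very formulation of \eqref{form:bracket1}--\eqref{form:bracket2} already presupposes $c_{P^{[n]}}(P^{[n]}\otimes V^{[n]})\subseteq V^{[n]}\otimes P^{[n]}$ and $c_{P^{[n]}}(V^{[n]}\otimes P^{[n]})\subseteq P^{[n]}\otimes V^{[n]}$. For $W^{[n]}$ I would use the braiding-stability of $N:=\mathrm{Ker}(\pi_{n}^{n+1})$, namely $c_{U^{[n]}}(N\otimes U^{[n]})\subseteq U^{[n]}\otimes N$ and $c_{U^{[n]}}(U^{[n]}\otimes N)\subseteq N\otimes U^{[n]}$, which holds because $U^{[n+1]}=U^{[n]}/N$ is a braided bialgebra quotient of $U^{[n]}$. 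Since $c_{P^{[n]}}$ is a restriction of $c_{U^{[n]}}$ and $W^{[n]}=N\cap P^{[n]}$, a routine linear-independence bookkeeping shows that $(U^{[n]}\otimes N)\cap(P^{[n]}\otimes P^{[n]})=P^{[n]}\otimes W^{[n]}$ and $(N\otimes U^{[n]})\cap(P^{[n]}\otimes P^{[n]})=W^{[n]}\otimes P^{[n]}$ (and likewise with $P^{[n]}$ replaced by $V^{[n]}$ in one tensor factor), whence $c_{P^{[n]}}(W^{[n]}\otimes W^{[n]})\subseteq W^{[n]}\otimes W^{[n]}$ and, together with the inclusions above, the compatibility of $c_{P^{[n]}}$ with the whole decomposition. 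I expect this last step to be the main obstacle: the first two steps are pure linear algebra with the idempotent $e$, whereas here one has to be careful about exactly which braiding-stability property of the biideal $\mathrm{Ker}(\pi_{n}^{n+1})$ is available --- equivalently, one must propagate the compatibility encoded in \eqref{form:bracket1}--\eqref{form:bracket2} from the generating subspace $V^{[n]}$ to all of $U^{[n]}$.
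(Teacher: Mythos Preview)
Your proof is correct and proceeds along the same lines as the paper's: both hinge on the idempotent $e=i^{[n]}b^{[n]}$ and the resulting splitting $P^{[n]}=\mathrm{Im}(e)\oplus\mathrm{Ker}(e)$. The difference is one of packaging. The paper imports the identities $b^{[n]}i^{[n]}=\mathrm{Id}_{V}$ and $b^{[n+1]}P(\pi_{n}^{n+1})=b^{[n]}$ from \cite[Proposition 5.7]{Ardizzoni-MMPrim} and uses the second of these to obtain $W^{[n]}\subseteq\mathrm{Ker}(b^{[n]})$ (via $b^{[n]}(w)=b^{[n+1]}P(\pi_{n}^{n+1})(w)=0$), whereas you extract both the idempotency of $e$ and that same inclusion directly from the injectivity of $i^{[n+1]}$, which you in turn read off the braided-Lie-algebra hypothesis; this makes your argument self-contained and avoids the external citation. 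As for the braided-subspace assertion, the paper's proof does not justify it explicitly either --- it is tacitly delegated to the construction in \cite{Ardizzoni-MMPrim} (cf.\ the invocation of \cite[Proposition 4.5]{Ardizzoni-MMPrim} for the categorical-subspace property of $K^{[n]}$ in the proof of Theorem~\ref{thm:KhaCosym}) --- so your identification of that step as the genuine obstacle is accurate, and your sketch via the braiding-stability of $\mathrm{Ker}(\pi_{n}^{n+1})$ is the right shape, though the clean inclusion $c_{U^{[n]}}(N\otimes U^{[n]})\subseteq U^{[n]}\otimes N$ ultimately rests on how the ideal is built from the categorical subspace $\mathrm{Im}(\mathrm{Id}-e)$ rather than on the bare fact that the quotient is braided.
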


\begin{proof}
As in Definition \ref{def:Lie}, denote by $P\left(\pi_{n}^{n+1}\right):P^{[n]}\rightarrow P^{[n+1]}$ the natural map induced by $\pi_{n}^{n+1}$. Since $\left(V,c,b\right)$ is a braided Lie algebra, by \cite[Proposition 5.7]{Ardizzoni-MMPrim},
we have that $b^{\left[n+1\right]}P\left(\pi_{n}^{n+1}\right)=b^{\left[n\right]}$
and $b^{\left[n\right]}i^{\left[n\right]}=\mathrm{Id}_{P^{\left[n\right]}}$.
By the first equality, for $w\in W^{\left[n\right]}$, we have \[
0=b^{\left[n+1\right]}\pi_{n}^{n+1}\left(w\right)\overset{w\in P^{\left[n\right]}}{=}b^{\left[n+1\right]}P\left(\pi_{n}^{n+1}\right)\left(w\right)=b^{\left[n\right]}\left(w\right)\]
so that $W^{\left[n\right]}\subseteq\mathrm{Ker}\left(b^{\left[n\right]}\right)$.
On the other hand, if $z\in\mathrm{Ker}\left(b^{\left[n\right]}\right)$,
then $z\in P^{\left[n\right]}$ and \[
\pi_{n}^{n+1}\left(z\right)\overset{z\in P^{\left[n\right]}}{=}\pi_{n}^{n+1}i^{\left[n\right]}b^{\left[n\right]}\left(z\right)=0\]
so that $\mathrm{Ker}\left(b^{\left[n\right]}\right)\subseteq W^{\left[n\right]}$
whence $W^{\left[n\right]}=\mathrm{Ker}\left(b^{\left[n\right]}\right)$.
Now, from $b^{\left[n\right]}i^{\left[n\right]}=\mathrm{Id}_{P^{\left[n\right]}}$,
one gets $W^{\left[n\right]}=\mathrm{Ker}\left(b^{\left[n\right]}\right)=\mathrm{Im}\left(\mathrm{Id}_{P^{\left[n\right]}}-i^{\left[n\right]}b^{\left[n\right]}\right)$
and $P^{\left[n\right]}=W^{\left[n\right]}\oplus V^{\left[n\right]}$,
as a direct sum of braided subspaces.
\end{proof}

Finally, for the reader's convenience, we quote a technical result we will invoke three times in the paper.

\begin{lem}\label{lem:Heyneman-Radford}
\cite[Lemma 5.3.3]{Montgomery} If $C$ is connected and $f:C\rightarrow D$ is a coalgebra map such that $f_{\mid P(C)}$ is injective, then $f$ is injective.
\end{lem}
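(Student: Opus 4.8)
The plan is to prove, by induction on $n$, that $f$ is injective on the $n$-th term $C_{n}$ of the coradical filtration of $C$, and then to conclude using the fact that this filtration is exhaustive, i.e. $C=\bigcup_{n\geq 0}C_{n}$. Connectedness enters through $C_{0}=K1_{C}$: since $f$ is a coalgebra map, $\varepsilon_{D}(f(1_{C}))=\varepsilon_{C}(1_{C})=1$, so $f(1_{C})\neq 0$ and $f$ is injective on $C_{0}$, which settles the base case. (Recall also that for a connected coalgebra $C_{1}=K1_{C}\oplus P(C)$, and that $\varepsilon_{C}(p)=0$ for every $p\in P(C)$.)

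For the inductive step, assume $f$ is injective on $C_{n}$ and let $c\in C_{n+1}$ with $f(c)=0$. Here I would invoke the standard property $\Delta(C_{n+1})\subseteq\sum_{i+j=n+1}C_{i}\otimes C_{j}$ of the coradical filtration: writing $\Delta(c)=1_{C}\otimes a+M+b\otimes 1_{C}$ with $a,b\in C_{n+1}$ and $M\in\sum_{1\leq i\leq n}C_{i}\otimes C_{n+1-i}\subseteq C_{n}\otimes C_{n}$, and applying $\mathrm{Id}\otimes\varepsilon_{C}$ and $\varepsilon_{C}\otimes\mathrm{Id}$ together with the counit axioms, one obtains $a-c\in C_{n}$ and $b-c\in C_{n}$, hence
\[
\overline{\Delta}(c):=\Delta(c)-c\otimes 1_{C}-1_{C}\otimes c=1_{C}\otimes(a-c)+M+(b-c)\otimes 1_{C}\in C_{n}\otimes C_{n}.
\]
Since $K$ is a field and $f$ is injective on $C_{n}$, the map $f\otimes f$ is injective on $C_{n}\otimes C_{n}$; and since $f$ is a coalgebra map with $f(c)=0$ we get $(f\otimes f)\overline{\Delta}(c)=\Delta_{D}(f(c))-f(c)\otimes f(1_{C})-f(1_{C})\otimes f(c)=0$. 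Therefore $\overline{\Delta}(c)=0$, i.e. $c\in P(C)$, and then $f(c)=0$ forces $c=0$ by the hypothesis that $f_{\mid P(C)}$ is injective. This completes the induction, and as every $c\in\mathrm{Ker}f$ lies in some $C_{n}$, we conclude that $f$ is injective.

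The only point that is not pure formal manipulation is the structural input $\Delta(C_{n+1})\subseteq\sum_{i+j=n+1}C_{i}\otimes C_{j}$ and the ensuing verification that $\overline{\Delta}(c)\in C_{n}\otimes C_{n}$ for $c\in C_{n+1}$; this is precisely where the coradical filtration of a connected coalgebra is used, and it is the fact one imports from the general theory (see \cite{Montgomery}). Everything else amounts to bookkeeping with $\varepsilon$ and with the defining identities $\Delta_{D}f=(f\otimes f)\Delta_{C}$ and $\varepsilon_{D}f=\varepsilon_{C}$ of a coalgebra map.
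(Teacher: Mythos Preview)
Your proof is correct and is essentially the standard Heyneman--Radford argument. Note, however, that the paper does not supply its own proof of this lemma: it is simply quoted from \cite[Lemma 5.3.3]{Montgomery} as a black box, so there is nothing in the paper to compare against beyond the citation. Your induction on the coradical filtration, using $\overline{\Delta}(c)\in C_{n}\otimes C_{n}$ for $c\in C_{n+1}$ and the injectivity of $f\otimes f$ on $C_{n}\otimes C_{n}$, is precisely the argument one finds in Montgomery's text.
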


\section{Standard vs coradical filtration\label{sec:StandardCorad}}

In this section we introduce the standard filtration of a universal
enveloping algebra and we study it in connection with the coradical
filtration of the underlying coalgebra.
\begin{defn}
Recall that a filtration on a vector space $M$ is an increasing sequence
$M_{\left(0\right)}\subseteq M_{\left(1\right)}\subseteq\cdots\subseteq M_{\left(n\right)}\subseteq\cdots$
of subspaces of $M$. By convention we write $M_{(-1)}:=0$. A filtration
$\left(M_{\left(n\right)}\right)_{n\in\mathbb{N}}$ on a vector space
$M$ gives rise to a graded module \[
\mathfrak{G}\left(M\right):=\oplus_{n\in\mathbb{N}}\mathfrak{G}^{n}\left(M\right)\qquad\textrm{where}\qquad\mathfrak{G}^{n}\left(M\right):=\frac{M_{\left(n\right)}}{M_{\left(n-1\right)}}.\]
Let $M$ and $N$ be filtered vector spaces with filtrations $\left(M_{\left(n\right)}\right)_{n\in\mathbb{N}}$
and $\left(N_{\left(n\right)}\right)_{n\in\mathbb{N}}$ respectively.
A filtered homomorphism is a $K$-linear map $f:M\rightarrow N$ such
that $f\left(M_{\left(n\right)}\right)\subseteq N_{\left(n\right)}$
for all $n\in\mathbb{N}$. Such a morphism induces in a natural way
a graded homomorphism $\mathfrak{G}\left(f\right):\mathfrak{G}\left(M\right)\rightarrow\mathfrak{G}\left(N\right).$
The $n$th graded component of $\mathfrak{G}\left(f\right)$ will
be denoted by $\mathfrak{G}^{n}\left(f\right)$.

A braided bialgebra $\left(B,c_{B}\right)$ is called \textbf{filtered}
if the underlying vector space has a filtration $\left(B_{\left(n\right)}\right)_{n\in\mathbb{N}}$
with $B=\cup B_{\left(n\right)}$ such that \[
\Delta_{B}\left(B_{\left(n\right)}\right)\subseteq\sum\limits _{i=0}^{n}B_{\left(i\right)}\otimes B_{\left(n-i\right)},\qquad B_{\left(i\right)}\cdot_{B}B_{\left(j\right)}\subseteq B_{\left(i+j\right)}\qquad\text{and}\qquad c_{B}\left(B_{\left(i\right)}\otimes B_{\left(j\right)}\right)\subseteq B_{\left(j\right)}\otimes B_{\left(i\right)}\]
for all $i,j\in\mathbb{N}$. A filtered braided bialgebra homomorphism
$f:\left(B,c_{B}\right)\rightarrow\left(B^{\prime},c_{B^{\prime}}\right)$
is a filtered homomorphism $f:B\rightarrow B^{\prime}$ which is also
a braided bialgebra homomorphism.\end{defn}
\begin{lem}
\label{lem: graded of filtered}Let $\left(B,c_{B}\right)$ be a filtered
braided bialgebra with filtration $\left(B_{\left(n\right)}\right)_{n\in\mathbb{N}}$.
Assume $B_{\left(0\right)}=K.$ Then the space $\mathfrak{G}\left(B\right)$
is a graded braided bialgebra with structures induced by those of
$B$. Furthermore, any filtered braided bialgebra homomorphism $f:\left(B,c_{B}\right)\rightarrow\left(B^{\prime},c_{B^{\prime}}\right)$
induces a graded braided bialgebra homomorphism $\mathfrak{G}\left(f\right):\mathfrak{G}\left(B\right)\rightarrow\mathfrak{G}\left(B^{\prime}\right).$ \end{lem}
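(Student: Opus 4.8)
The plan is to realise $\mathfrak{G}$ as a (strong) monoidal functor from the category of filtered vector spaces, with filtered $K$-linear maps, to the category of $\mathbb{N}$-graded vector spaces, and then to transport each structure map of $B$ along it. The only step that is not purely formal, and which I expect to be the main point, is the natural isomorphism
\[
\varphi_{M,N}\colon\mathfrak{G}(M)\otimes\mathfrak{G}(N)\longrightarrow\mathfrak{G}(M\otimes N),\qquad\overline{x}\otimes\overline{y}\longmapsto\overline{x\otimes y},
\]
where $M\otimes N$ carries the tensor-product filtration $(M\otimes N)_{(n)}:=\sum_{i+j=n}M_{(i)}\otimes N_{(j)}$. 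Over the field $K$ one proves this by picking a basis of $M$ of the form $\bigsqcup_{k\in\mathbb{N}}\mathcal{B}_{k}$ with $\bigsqcup_{k\leq n}\mathcal{B}_{k}$ a basis of $M_{(n)}$ for every $n$ (obtained by successively extending bases of the $M_{(n)}$), doing the same for $N$, observing that the elements $b\otimes b'$ with $b\in\mathcal{B}_{k}$, $b'\in\mathcal{B}_{l}$ then form a filtration-adapted basis of $M\otimes N$, and comparing graded components degree by degree. Naturality of $\varphi$ and its compatibility with the associativity and unit constraints are then routine; compare \cite[1.8]{AMS-MM2}.

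Granting this, each of $m_{B},u_{B},\Delta_{B},\varepsilon_{B},c_{B}$ is a filtered map. Indeed $\Delta_{B}(B_{(n)})\subseteq\sum_{i+j=n}B_{(i)}\otimes B_{(j)}=(B\otimes B)_{(n)}$ is the hypothesis; $m_{B}((B\otimes B)_{(n)})\subseteq B_{(n)}$ is the condition $B_{(i)}\cdot_{B}B_{(j)}\subseteq B_{(i+j)}$; the relation $c_{B}(B_{(i)}\otimes B_{(j)})\subseteq B_{(j)}\otimes B_{(i)}$ gives $c_{B}((B\otimes B)_{(n)})\subseteq(B\otimes B)_{(n)}$ together with the extra information that the induced map sends the $\mathfrak{G}^{i}(B)\otimes\mathfrak{G}^{j}(B)$-piece into the $\mathfrak{G}^{j}(B)\otimes\mathfrak{G}^{i}(B)$-piece; and $u_{B},\varepsilon_{B}$ are filtered because $K$ carries the constant filtration and, by the hypothesis $B_{(0)}=K$, we have $\mathfrak{G}^{0}(B)=B_{(0)}=K$ with $\mathfrak{G}(u_{B})$ and $\mathfrak{G}(\varepsilon_{B})$ the identity in degree $0$. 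Applying $\mathfrak{G}$ and inserting the isomorphisms $\varphi$ yields graded maps $\overline{m}\colon\mathfrak{G}(B)\otimes\mathfrak{G}(B)\to\mathfrak{G}(B)$, $\overline{u}\colon K\to\mathfrak{G}(B)$, $\overline{\Delta}\colon\mathfrak{G}(B)\to\mathfrak{G}(B)\otimes\mathfrak{G}(B)$, $\overline{\varepsilon}\colon\mathfrak{G}(B)\to K$ and $\overline{c}\colon\mathfrak{G}(B)\otimes\mathfrak{G}(B)\to\mathfrak{G}(B)\otimes\mathfrak{G}(B)$ with the correct internal degrees.

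I would then verify that $(\mathfrak{G}(B),\overline{m},\overline{u},\overline{\Delta},\overline{\varepsilon},\overline{c})$ satisfies all the axioms of a graded braided bialgebra: associativity and unitality of $\overline{m}$, coassociativity and counitality of $\overline{\Delta}$, the quantum Yang--Baxter equation for $\overline{c}$, the four compatibilities of $\overline{c}$ with $\overline{m}$ and $\overline{u}$, the two compatibilities of $\overline{c}$ with $\overline{\Delta}$ and $\overline{\varepsilon}$, and the bialgebra identity $\overline{\Delta}\,\overline{m}=(\overline{m}\otimes\overline{m})(\mathfrak{G}(B)\otimes\overline{c}\otimes\mathfrak{G}(B))(\overline{\Delta}\otimes\overline{\Delta})$. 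Each of these is the image under the functor $\mathfrak{G}$ of the commutative diagram of filtered maps expressing the same identity for $B$, rewritten in terms of tensor powers of $\mathfrak{G}(B)$ by means of the naturality and coherence of the $\varphi$'s, so nothing further need be computed. For the last assertion, a filtered braided bialgebra homomorphism $f\colon B\to B'$ is in particular a filtered linear map, hence induces the graded map $\mathfrak{G}(f)$, and the diagrams saying that $f$ is compatible with $m,u,\Delta,\varepsilon,c$ again pass through $\mathfrak{G}$, so $\mathfrak{G}(f)$ is a morphism of graded braided bialgebras. In short, once the monoidal isomorphism $\varphi$ and its coherence are set up, the whole statement follows formally from the filtration axioms; that bookkeeping is the only real obstacle.
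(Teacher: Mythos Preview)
Your argument is correct and essentially complete; the monoidal-functor viewpoint you adopt is a clean way to package the bookkeeping. The paper's own proof takes a somewhat different, more referential route: it invokes \cite[page 230]{Sw} to get the graded coalgebra and graded algebra structures on $\mathfrak{G}(B)$ separately from the coalgebra and algebra filtrations, then observes directly that $c_{B}$ descends to graded components, and declares the remaining checks straightforward. Your approach, by contrast, treats all five structure maps uniformly through the isomorphism $\varphi_{M,N}$ and the functoriality of $\mathfrak{G}$, which avoids external references and makes the passage of each axiom transparent. One small point the paper handles more carefully: you assert that $u_{B}$ is filtered ``by the hypothesis $B_{(0)}=K$'', but the hypothesis only says $B_{(0)}$ is one-dimensional, not that $1_{B}\in B_{(0)}$. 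The paper closes this gap by noting that a coalgebra filtration forces the coradical (hence $1_{B}$) into $B_{(0)}$ via \cite[Proposition 11.1.1]{Sw}; you may want to add a sentence to the same effect.
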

\begin{proof}
Set $G:=\mathfrak{G}\left(B\right),$ $G^{n}:=B_{\left(n\right)}/B_{\left(n-1\right)}$
and let $p_{n}:B_{\left(n\right)}\rightarrow G^{n}$ be the canonical
projection, for every $n\in\mathbb{N}$. Since $\left(B_{\left(n\right)}\right)_{n\in\mathbb{N}}$
is a coalgebra filtration on $B,$ then $G$ carries a graded coalgebra
structure $\left(G,\Delta_{G},\varepsilon_{G}\right)$ (see \cite[page 230]{Sw}).
Moreover the coradical of $B$ is contained in $B_{\left(0\right)}$
(see \cite[Proposition 11.1.1]{Sw}) whence it is $K.$ In particular
$1_{B}\in B_{\left(0\right)}$. Thus $\left(B_{\left(n\right)}\right)_{n\in\mathbb{N}}$
is also an algebra filtration on $B$ whence $G$ carries a graded
algebra structure too (see \cite[page 230]{Sw}). Since by definition
$c_{B}\left(B_{\left(i\right)}\otimes B_{\left(j\right)}\right)\subseteq B_{\left(j\right)}\otimes B_{\left(i\right)}$
for all $i,j\in\mathbb{N}$, then $c_{B}$ induces a braiding $c_{G}^{a,b}:G^{a}\otimes G^{b}\rightarrow G^{b}\otimes G^{a}$
for all $a,b\in\mathbb{N}.$ It is straightforward to prove that $\left(G,m_{G},u_{G},\Delta_{G},\varepsilon_{G},c_{G}\right)$
is indeed a graded braided bialgebra. Let $f:\left(B,c_{B}\right)\rightarrow\left(B^{\prime},c_{B^{\prime}}\right)$
be a filtered braided bialgebra homomorphism. By \cite[page 229]{Sw},
$f$ induces a graded coalgebra map $\mathfrak{G}\left(f\right):\mathfrak{G}\left(B,c_{B}\right)\rightarrow\mathfrak{G}\left(B^{\prime},c_{B^{\prime}}\right)$.
Furthermore, the same map is also a graded algebra map. It is easy
to check that $\mathfrak{G}\left(f\right)$ is also a morphism of
braided vector spaces whence a graded braided bialgebra homomorphism. 
\end{proof}
Let $\left(V,c\right)$ be a braided vector space and set $T:=T\left(V,c\right)$.
Recall that the \textbf{\emph{standard filtration}} on $T$ is defined
by setting $T_{\left(n\right)}:=\oplus_{i=0}^{n}V^{\otimes i}.$ 
\begin{lem}
\label{lem: induced filtration}Let $\left(V,c,b\right)$ be a braided
bialgebra and set $U:=U(V,c,b)$. Let $\left(T_{\left(n\right)}\right)_{n\in\mathbb{N}}$
be the standard filtration on $T\left(V,c\right)$. Set $U_{\left(n\right)}:=\pi_{U}\left(T_{\left(n\right)}\right)$,
for each $n\in\mathbb{N}$. Then $\left(U_{\left(n\right)}\right)_{n\in\mathbb{N}}$
is a filtration on $U$ that makes it a filtered connected braided
bialgebra. \end{lem}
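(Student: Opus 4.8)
The plan is to transfer the filtered braided bialgebra structure from $T=T(V,c)$ to $U=U(V,c,b)$ via the canonical surjection $\pi_U$, and then check connectedness. First I would recall that $\left(T_{(n)}\right)_{n\in\mathbb{N}}$ with $T_{(n)}=\oplus_{i=0}^n V^{\otimes i}$ is a filtration making $T(V,c)$ a filtered braided bialgebra: indeed $T_{(0)}=K$, the multiplication satisfies $T_{(i)}\cdot T_{(j)}\subseteq T_{(i+j)}$ since $V^{\otimes i}\cdot V^{\otimes j}\subseteq V^{\otimes(i+j)}$, the comultiplication $\Delta_T$ is graded hence $\Delta_T(T_{(n)})\subseteq\sum_{i=0}^n T_{(i)}\otimes T_{(n-i)}$ (this uses the explicit form of $\Delta_T$ built from $c$, together with the shape of $c_T$ in Figure \ref{Fig:cT}), and finally $c_T(T_{(i)}\otimes T_{(j)})\subseteq T_{(j)}\otimes T_{(i)}$ again by gradedness of $c_T$. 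Since $\pi_U$ is a surjective braided bialgebra homomorphism, setting $U_{(n)}:=\pi_U(T_{(n)})$ immediately gives $U=\cup_n U_{(n)}$, an increasing chain, with $U_{(i)}\cdot_U U_{(j)}=\pi_U(T_{(i)})\cdot_U\pi_U(T_{(j)})=\pi_U(T_{(i)}\cdot_T T_{(j)})\subseteq\pi_U(T_{(i+j)})=U_{(i+j)}$, and similarly $\Delta_U(U_{(n)})=(\pi_U\otimes\pi_U)\Delta_T(T_{(n)})\subseteq\sum_{i=0}^n U_{(i)}\otimes U_{(n-i)}$ and $c_U(U_{(i)}\otimes U_{(j)})\subseteq U_{(j)}\otimes U_{(i)}$, where $c_U$ is the braiding of $U$ (which $\pi_U$ intertwines with $c_T$ by definition of morphism of braided vector spaces). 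Thus $\left(U_{(n)}\right)_{n\in\mathbb{N}}$ is a braided bialgebra filtration on $U$.

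Next I would verify $U_{(0)}=K$, which is needed to invoke Lemma \ref{lem: graded of filtered} and, more importantly, is part of the assertion that $U$ is filtered in the sense of the preceding Definition. We have $U_{(0)}=\pi_U(T_{(0)})=\pi_U(K1_T)=K1_U$, and $1_U\neq 0$ since $\varepsilon_U(1_U)=1$ (the counit does not vanish). So $U_{(0)}=K1_U\cong K$.

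Finally, connectedness. By Remark \ref{rem:ConnIsHopf}(1), the universal enveloping algebra $U(V,c,b)$, being primitively generated (Definition \ref{def:Lie}), has connected underlying braided coalgebra; alternatively one can argue directly that since $\left(U_{(n)}\right)_{n}$ is a coalgebra filtration with $U_{(0)}=K1_U$, the coradical $U_0$ is contained in $U_{(0)}=K1_U$ (by \cite[Proposition 11.1.1]{Sw}), hence $U_0=K1_U$ is one-dimensional, and the compatibility $c_U(u\otimes 1_U)=1_U\otimes u$, $c_U(1_U\otimes u)=u\otimes 1_U$ for all $u\in U$ holds because it holds in $T$ on $1_T$ and $\pi_U$ is surjective and intertwines the braidings. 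Therefore $\left(U,c_U\right)$ is a filtered connected braided bialgebra.

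I do not anticipate a genuine obstacle here: the statement is essentially a transport-of-structure along the surjection $\pi_U$, and every required containment follows formally from $\pi_U$ being a morphism of braided bialgebras together with the corresponding containment in $T(V,c)$. The only point requiring a small amount of care is the comultiplication inclusion $\Delta_T(T_{(n)})\subseteq\sum_{i=0}^n T_{(i)}\otimes T_{(n-i)}$, i.e. that the standard filtration is a coalgebra filtration on $T(V,c)$; this is where one must use that $\Delta_T$ is the graded algebra map determined by the braiding and that $c_T$ does not raise total degree, so that $\Delta_T$ maps $V^{\otimes n}$ into $\oplus_{i=0}^n V^{\otimes i}\otimes V^{\otimes(n-i)}$. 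Once that is in hand, everything else is routine.
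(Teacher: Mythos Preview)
Your proposal is correct and follows exactly the approach the paper intends: the paper's own proof reads simply ``It is straightforward,'' and you have supplied precisely the routine transport-of-structure verifications (filtration axioms pushed forward along the surjective braided bialgebra map $\pi_U$, plus connectedness via either Remark \ref{rem:ConnIsHopf}(1) or \cite[Proposition 11.1.1]{Sw}) that this phrase is meant to cover.
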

\begin{proof}
It is straightforward.\end{proof}
\begin{defn}
Let $\left(V,c,b\right)$ be a braided Lie algebra and set $U:=U(V,c,b)$. 

The filtration $\left(U_{\left(n\right)}\right)_{n\in\mathbb{N}}$
of Lemma \ref{lem: induced filtration} will be called the \textbf{standard
filtration} on $U$ and $\mathfrak{G}\left(U\right)$ will denote
the graded braided bialgebra associated to this filtration.

We say that $U$ is \textbf{strictly generated by} $V$ whenever the
standard and the coradical filtration on $U$ coincide. This means
the $n$th term $U_n$ of the coradical filtration of $U$ is given by $U_{n}=U_{\left(n\right)}=\sum_{i=0}^{n}\pi_{U}\left(V\right)^{i}$. \end{defn}
\begin{rem}
\label{rem:strictisprimgen} Let $B$ be a braided bialgebra. If $B$
is connected, then the coradical filtration $\left(B_{n}\right)_{n\in\mathbb{N}}$
of $B$ makes $B$ itself into a filtered connected braided bialgebra. Thus,
by Lemma \ref{lem: graded of filtered}, the graded coalgebra $\mathrm{gr}B$
associated to the coradical filtration of $B$ is indeed a graded
braided bialgebra. Since $B$ is connected, by \cite[Proposition 11.1.1]{Sw} we have that $\mathrm{gr}B$ is connected too. Thus, by \cite[Lemma 11.2.3]{Sw}, $\mathrm{gr}B$ is strictly graded as a coalgebra. In particular $P(\mathrm{gr}B)=\mathrm{gr}^1B=B_1/B_0$.\end{rem}
\begin{prop}
\label{pro:CoradicalFiltration}Let $\left(V,c,b\right)$ be a braided
Lie algebra and set $U:=U(V,c,b)$. Then the identity on $U$ induces
a graded braided bialgebra homomorphism $\xi_{U}:\mathfrak{G}\left(U\right)\rightarrow\mathrm{gr}U.$
The following assertions are equivalent.
\begin{enumerate}
\item $\xi_{U}$ is bijective.
\item $\xi_{U}$ is surjective.
\item $\xi_{U}$ is injective.
\item $U_{\left(n\right)}\cap U_{n-1}=U_{\left(n-1\right)}$ for
all $n\in\mathbb{N}$.
\item $U_{n}=U_{\left(n\right)}+U_{n-1}$ for all $n\in\mathbb{N}$.
\item $U$ is strictly generated by $V$.
\item $P\left(\mathfrak{G}\left(U\right)\right)=U_{\left(1\right)}/U_{\left(0\right)}$.
\item $\pi_{U}\left(V\right)^{n}\cap U_{n-1}\subseteq U_{\left(n-1\right)}$
for all $n\in\mathbb{N}$.
\end{enumerate}
\end{prop}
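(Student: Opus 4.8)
The plan is to establish the cycle of implications by first constructing the map $\xi_U$ and then showing that all eight conditions revolve around the single question of whether the standard filtration refines the coradical filtration correctly at each level. First I would build $\xi_U$: since $U$ is connected (Lemma \ref{lem: induced filtration}), its coradical filtration makes it a filtered connected braided bialgebra by Remark \ref{rem:strictisprimgen}. The standard filtration satisfies $U_{(n)}\subseteq U_n$ because $U_{(0)}=K=U_0$, $U_{(1)}=K+\pi_U(V)\subseteq U_1$ (as $\pi_U(V)$ consists of primitives), and then $U_{(n)}=\sum_{i=0}^n \pi_U(V)^i\subseteq \sum_{i=0}^n U_1^i\subseteq U_n$ using that the coradical filtration is an algebra filtration. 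Hence $\mathrm{Id}_U$ is a filtered braided bialgebra homomorphism from $(U,U_{(\bullet)})$ to $(U,U_{\bullet})$, and Lemma \ref{lem: graded of filtered} yields the graded braided bialgebra homomorphism $\xi_U:\mathfrak G(U)\to \mathrm{gr}U$, whose $n$th component sends $u+U_{(n-1)}$ to $u+U_{n-1}$ for $u\in U_{(n)}$.

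Next I would dispose of the easy equivalences. Conditions (4), (5), (6), (8) are essentially reformulations of each other. The equivalence of (4) and (6) "$U$ strictly generated by $V$" comes from unwinding the definition: strictly generated means $U_n=U_{(n)}$ for all $n$, and since $U_{(n)}\subseteq U_n$ always, one shows by induction that $U_n=U_{(n)}$ for all $n$ iff $U_{(n)}\cap U_{n-1}=U_{(n-1)}$ for all $n$ — the forward direction is trivial, and conversely if $U_{(n)}\cap U_{n-1}=U_{(n-1)}$ then combined with (5) (which I would show equivalent to (4) by a short argument: $U_n=U_{(n)}+U_{n-1}$ together with the inductive hypothesis $U_{n-1}=U_{(n-1)}$ gives $U_n=U_{(n)}$) one gets the claim. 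Condition (8) is just (4) rewritten using $U_{(n)}=\pi_U(V)^n+U_{(n-1)}$, so that $U_{(n)}\cap U_{n-1}\subseteq U_{(n-1)}$ reduces to $\pi_U(V)^n\cap U_{n-1}\subseteq U_{(n-1)}$. For the equivalence with (5) one uses that the coradical filtration of a connected bialgebra is determined by $U_n=\{x\in U:\Delta(x)-x\otimes 1-1\otimes x\in \sum_{i=1}^{n-1}U_i\otimes U_{n-i}\}$ and a dimension/filtration-counting argument.

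Then I would connect these to the statements about $\xi_U$. Surjectivity (2) of $\xi_U$ in degree $n$ says $U_n=U_{(n)}+U_{n-1}$, which is exactly (5); so (2) $\Leftrightarrow$ (5). Injectivity (3) in degree $n$ says that if $u\in U_{(n)}$ maps into $U_{n-1}$ then $u\in U_{(n-1)}$, i.e. $U_{(n)}\cap U_{n-1}\subseteq U_{(n-1)}$, which together with the trivial reverse inclusion is exactly (4); so (3) $\Leftrightarrow$ (4). Obviously (1) $\Rightarrow$ (2) and (1) $\Rightarrow$ (3); and since we will have shown (2) $\Leftrightarrow$ (5) $\Leftrightarrow$ (4) $\Leftrightarrow$ (3), all four combine to give (1). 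Finally, for (7): $\mathrm{gr}U$ is strictly graded as a coalgebra (Remark \ref{rem:strictisprimgen}), hence $P(\mathrm{gr}U)=\mathrm{gr}^1U$. Since $\xi_U$ is a coalgebra map between connected coalgebras, by Lemma \ref{lem:Heyneman-Radford} $\xi_U$ is injective iff its restriction to $P(\mathfrak G(U))$ is injective; now $\xi_U^1$ is an isomorphism $U_{(1)}/U_{(0)}\to U_1/U_0=\mathrm{gr}^1U$ (both equal $\pi_U(V)$ up to the scalars, using Remark \ref{rem:Milnor-Moore}), so injectivity of $\xi_U$ is equivalent to $P(\mathfrak G(U))$ being no bigger than $\mathfrak G^1(U)=U_{(1)}/U_{(0)}$, which is precisely (7). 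This gives (3) $\Leftrightarrow$ (7), closing the loop.

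The main obstacle will be the precise handling of condition (7) and the identification of $P(\mathfrak G(U))$: one must argue carefully that $\mathfrak G^1(U)\subseteq P(\mathfrak G(U))$ always holds (this is automatic since $\mathfrak G(U)$ is graded connected and degree-one elements are primitive in any strictly graded... but $\mathfrak G(U)$ need not be strictly graded, so one needs that $U_{(1)}/U_{(0)}$ maps to primitives, which follows from $\pi_U(V)\subseteq P(U)$ and the compatibility of $\Delta$ with the standard filtration), so that (7) really says the primitives are \emph{exactly} the degree-one part, and then use Lemma \ref{lem:Heyneman-Radford} applied to $\xi_U$ to convert this into injectivity. The rest is bookkeeping with filtrations, and I would organize it as: construct $\xi_U$; prove (4)$\Leftrightarrow$(5)$\Leftrightarrow$(6)$\Leftrightarrow$(8) directly; prove (2)$\Leftrightarrow$(5) and (3)$\Leftrightarrow$(4) from the definitions of the graded components of $\xi_U$; prove (3)$\Leftrightarrow$(7) via Lemma \ref{lem:Heyneman-Radford}; and observe (1)$\Leftrightarrow$(2)$\wedge$(3), which by the above is already guaranteed.
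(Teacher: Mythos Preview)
Your overall plan matches the paper's closely: construct $\xi_U$ from the inclusion $U_{(n)}\subseteq U_n$ via Lemma~\ref{lem: graded of filtered}, read off $(2)\Leftrightarrow(5)$ and $(3)\Leftrightarrow(4)$ from the image and kernel of the graded components, handle $(4)\Leftrightarrow(8)$ by writing $U_{(n)}=U_{(n-1)}+\pi_U(V)^n$, and treat $(7)$ via Lemma~\ref{lem:Heyneman-Radford} using that $P(\mathrm{gr}\,U)=\mathrm{gr}^1U$ and that $\xi_U^1$ is injective. All of this is correct and is exactly what the paper does.

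The one genuine gap is your passage from $(4)$ back to $(6)$ (equivalently, your claimed direct equivalence $(4)\Leftrightarrow(5)$). You propose to deduce $(6)$ from $(4)$ \emph{together with} $(5)$, and then to get $(5)$ from $(4)$ by ``a dimension/filtration-counting argument'' based on the description $U_n=\{x:\Delta(x)-x\otimes1-1\otimes x\in\sum U_i\otimes U_{n-i}\}$. That last step is not an argument: there is no dimension count available (everything may be infinite-dimensional), and the coradical description you quote does not by itself force $U_n\subseteq U_{(n)}+U_{n-1}$. So as written your cycle does not close.

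The paper avoids this by proving $(4)\Rightarrow(6)$ directly, using the one ingredient you never invoke: the standard filtration is \emph{exhaustive}, i.e.\ $U=\bigcup_n U_{(n)}$, because $\pi_U(V)$ generates $U$ as an algebra. Given $z\in U_n$, pick the least $t$ with $z\in U_{(t)}$; if $t\geq n+1$ then $z\in U_{(t)}\cap U_n\subseteq U_{(t)}\cap U_{t-1}=U_{(t-1)}$ by $(4)$, contradicting minimality of $t$; hence $t\leq n$ and $z\in U_{(n)}$. Plugging this minimality argument in place of your vague $(4)\Rightarrow(5)$ step, your proof goes through and coincides with the paper's.
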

\begin{proof}
Note that, for each $n\in\mathbb{N}$, one has $U_{\left(n\right)}\subseteq U_{n}$
so that $\mathrm{Id}_{U}$ is a filtered braided bialgebra automorphism
whence, by Lemma \ref{lem: graded of filtered}, it induces a graded
braided bialgebra homomorphism $\xi_{U}:\mathfrak{G}\left(U\right)\rightarrow\mathrm{gr}U.$ 

$\left(6\right)\Rightarrow\left(1\right)\Rightarrow\left(2\right),\left(1\right)\Rightarrow\left(3\right)$
These implications are trivial.

$\left(3\right)\Leftrightarrow\left(4\right)$ Let $\xi_{U}^{n}:U_{\left(n\right)}/U_{\left(n-1\right)}\rightarrow U_{n}/U_{n-1}$
be the $n$th graded component of $\xi_{U}$. Then $\mathrm{Ker}\left(\xi_{U}^{n}\right)=\left[U_{\left(n\right)}\cap U_{n-1}\right]/U_{\left(n-1\right)}$. 

$\left(4\right)\Rightarrow\left(6\right)$ It is enough to prove that
$U_{n}\subseteq U_{\left(n\right)}$. Let $z\in U_{n}$. Since $\pi_U\left(V\right)$
generates $U$ as a $K$-algebra, there is a least $t\in\mathbb{N}$
such that $z\in U_{\left(t\right)}$. If $t\geq n+1$, then $z\in U_{\left(t\right)}\cap U_{n}\subseteq U_{\left(t\right)}\cap U_{t-1}=U_{\left(t-1\right)}$
contradicting the minimality of $t$. Then $t\leq n$ whence $z\in U_{\left(t\right)}\subseteq U_{\left(n\right)}$. 

$\left(2\right)\Leftrightarrow\left(5\right)$ It follows from $\xi_{U}\left(U_{\left(n\right)}/U_{\left(n-1\right)}\right)=\left[U_{\left(n\right)}+U_{n-1}\right]/U_{n-1}$.

$\left(5\right)\Rightarrow\left(6\right)$ By induction on $n\in\mathbb{N}$,
we deduce that $U_{n}=U_{\left(n\right)}$.

$\left(1\right)\Rightarrow\left(7\right)$ Since $U$ is connected,
by Remark \ref{rem:strictisprimgen}, the primitive
part of $\mathrm{gr}U$ is $\mathrm{gr}^{1}U$. Since $\xi_{U}$ is
a graded braided bialgebra isomorphism we get that $P\left(\mathfrak{G}\left(U\right)\right)=\mathfrak{G}^{1}\left(U\right).$

$\left(7\right)\Rightarrow\left(3\right)$ In view of the hypothesis,
the restriction of $\xi_{U}$ to $P\left(\mathfrak{G}\left(U\right)\right)$
is the map $\xi_{U}^{1}:U_{\left(1\right)}/U_{\left(0\right)}\rightarrow U_{1}/U_{0}.$
The kernel of this map is $\left[U_{\left(1\right)}\cap U_{0}\right]/U_{\left(0\right)}$.
Since $U$ is connected, then $U_{0}=K=U_{\left(0\right)}$ so that
$U_{\left(1\right)}\cap U_{0}=U_{\left(1\right)}\cap U_{\left(0\right)}=U_{\left(0\right)}$.
Hence the restriction of $\xi_{U}$ to $P\left(\mathfrak{G}\left(U\right)\right)$
is injective. By Lemma \ref{lem:Heyneman-Radford}, $\xi_{U}$ is injective
too.

$\left(4\right)\Leftrightarrow\left(8\right)$ It is enough to prove
that $U_{\left(n\right)}\cap U_{n-1}=U_{\left(n-1\right)}+\pi_{U}\left(V\right)^{n}\cap U_{n-1}$.
Let $z\in U_{\left(n\right)}\cap U_{n-1}$. Then $z\in U_{\left(n\right)}=U_{\left(n-1\right)}+\pi_{U}\left(V\right)^{n}$.
Hence there are $x\in U_{\left(n-1\right)}$ and $y\in\pi_{U}\left(V\right)^{n}$
such that $z=x+y$. Then $y=z-x\in U_{\left(n\right)}\cap U_{n-1}+U_{\left(n-1\right)}\subseteq U_{n-1}$.
We have so proved that $z=x+y\in U_{\left(n-1\right)}+\pi_{U}\left(V\right)^{n}\cap U_{n-1}$
so that $U_{\left(n\right)}\cap U_{n-1}\subseteq U_{\left(n-1\right)}+\pi_{U}\left(V\right)^{n}\cap U_{n-1}$.
The other inclusion is trivial.\end{proof}
\begin{prop}
\label{pro: induced filtration}Let $\left(V,c,b\right)$ be a braided
bialgebra and set $U:=U(V,c,b)$. Let $\left(U_{\left(n\right)}\right)_{n\in\mathbb{N}}$
be the standard filtration on $U$. Then there exists a graded braided
bialgebra homomorphism $\vartheta_{U}:T\left(V,c\right)\rightarrow\mathfrak{G}\left(U\right)$
which is surjective and lifts the map $\vartheta_{U}^{1}:V\rightarrow U_{\left(1\right)}/U_{\left(0\right)}=\mathfrak{G}^{1}\left(U\right):v\mapsto\pi_{U}\left(v\right)+U_{\left(0\right)}$. \end{prop}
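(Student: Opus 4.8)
The plan is to use the universal property of the braided tensor algebra $T(V,c)$ to construct $\vartheta_U$, and then verify the required properties directly from the definition of the standard filtration on $U$.

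First I would note that, since $U=U(V,c,b)$ is primitively generated and the standard filtration $\left(U_{(n)}\right)_{n\in\mathbb{N}}$ makes $U$ a filtered connected braided bialgebra (Lemma~\ref{lem: induced filtration}), Lemma~\ref{lem: graded of filtered} applies and $\mathfrak{G}(U)$ is a graded braided bialgebra with $\mathfrak{G}^0(U)=U_{(0)}=K$. The map $\vartheta_U^1:V\to\mathfrak{G}^1(U)=U_{(1)}/U_{(0)}$, $v\mapsto\pi_U(v)+U_{(0)}$, lands in the primitive part of $\mathfrak{G}(U)$: indeed $\pi_U(v)$ is primitive in $U$ and passing to the associated graded of the coalgebra filtration sends primitives of $U_{(1)}$ to primitives of $\mathfrak{G}(U)$ in degree one (the comultiplication on $\mathfrak{G}^1(U)$ is the induced one, so $\bar\Delta\bigl(\pi_U(v)+U_{(0)}\bigr)=\bigl(\pi_U(v)+U_{(0)}\bigr)\otimes 1+1\otimes\bigl(\pi_U(v)+U_{(0)}\bigr)$ modulo the lower filtration terms, which vanish in $\mathfrak{G}^1(U)\otimes\mathfrak{G}^0(U)\oplus\mathfrak{G}^0(U)\otimes\mathfrak{G}^1(U)$). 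Moreover $\vartheta_U^1$ is a morphism of braided vector spaces from $(V,c)$ to the infinitesimal braiding of $\mathfrak{G}(U)$, because the braiding $c_{\mathfrak{G}(U)}$ on $\mathfrak{G}(U)$ is induced by that of $U$, which on $\pi_U(V)$ is in turn induced from $c$ via $\pi_U$ (cf. the compatibility of $\pi_U$ with braidings coming from Definition~\ref{def:Lie}).

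Next I would invoke the universal property of the braided tensor algebra as described in Example~\ref{ex: tensor algebra}: a morphism of braided vector spaces from $(V,c)$ into the primitive part of a braided bialgebra extends uniquely to a braided bialgebra homomorphism out of $T(V,c)$. Applying this to $\vartheta_U^1:(V,c)\to P(\mathfrak{G}(U))$ yields a braided bialgebra homomorphism $\vartheta_U:T(V,c)\to\mathfrak{G}(U)$. It is automatically graded: on $V^{\otimes n}=T^n(V,c)$ it is the $n$-fold product of elements of $\mathfrak{G}^1(U)$, hence lands in $\mathfrak{G}^n(U)$; concretely $\vartheta_U(v_1\otimes\cdots\otimes v_n)=\bigl(\pi_U(v_1)\cdots\pi_U(v_n)+U_{(n-1)}\bigr)$, using that $U_{(i)}\cdot U_{(j)}\subseteq U_{(i+j)}$ and $U_{(n)}=\pi_U(T_{(n)})=\sum_{i=0}^n\pi_U(V)^i$. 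Surjectivity is then immediate: since $\pi_U(V)$ generates $U$ as an algebra, $\mathfrak{G}^n(U)=U_{(n)}/U_{(n-1)}$ is spanned by images of products of $n$ elements of $\pi_U(V)$, which are precisely the values of $\vartheta_U$ on $V^{\otimes n}$; in fact $\vartheta_U$ is nothing but $\mathfrak{G}(\pi_U)$ composed with the canonical identification of $\mathfrak{G}(T(V,c))$ with $T(V,c)$ for the standard filtration.

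The main obstacle, modest as it is here, is the verification that $\vartheta_U^1$ actually takes values in the primitive part of $\mathfrak{G}(U)$ and respects the braiding — that is, the careful bookkeeping of which induced structures on $\mathfrak{G}(U)$ agree with the structures transported from $T(V,c)$ through $\pi_U$. Once this is in place, everything else follows formally from the universal property of $T(V,c)$ and from the explicit description $U_{(n)}=\sum_{i=0}^{n}\pi_U(V)^i$ of the standard filtration. I expect no genuine difficulty, and the proof in the paper is likely to be short, perhaps even relegating the routine checks to the reader as in Lemma~\ref{lem: induced filtration}.
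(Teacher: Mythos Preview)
Your proposal is correct and follows essentially the same route as the paper: verify that $\vartheta_U^1$ lands in $P(\mathfrak{G}(U))$ and is a morphism of braided vector spaces, then apply the universal property of $T(V,c)$ and deduce surjectivity from the fact that $\mathfrak{G}(U)$ is generated by $\mathfrak{G}^1(U)$. The paper's argument for primitivity is slightly slicker---it just invokes that a connected graded coalgebra has $\mathfrak{G}^1(U)\subseteq P(\mathfrak{G}(U))$---but otherwise the two proofs match.
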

\begin{proof}
Set $T:=T\left(V,c\right)$ and let $q_{n}:U_{\left(n\right)}\rightarrow U_{\left(n\right)}/U_{\left(n-1\right)}\mathfrak{=G}^{n}\left(U\right)$
denote the canonical projection. Since $\mathfrak{G}\left(U\right)$
is a connected graded coalgebra, it is clear that $\mathrm{Im}\left(\vartheta_{U}^{1}\right)=\mathfrak{G}^{1}\left(U\right)\subseteq P\left(\mathfrak{G}\left(U\right)\right)$.
Moreover $\vartheta_{U}^{1}:V\rightarrow P\left(\mathfrak{G}\left(U\right)\right)$
is a morphism of braided vector spaces as, for every $u,v\in V$ we
have\begin{eqnarray*}
c_{\mathfrak{G}\left(U\right)}\left(\vartheta_{U}^{1}\otimes\vartheta_{U}^{1}\right)\left(u\otimes v\right) & = & c_{\mathfrak{G}\left(U\right)}\left((\pi_{U}\left(u\right)+U_{\left(0\right)})\otimes (\pi_{U}\left(v\right)+U_{\left(0\right)})\right)\\
 & = & \left(q_{1}\otimes q_{1}\right)c_{U}\left(\pi_{U}\left(u\right)\otimes\pi_{U}\left(v\right)\right)\\
 & = & \left(q_{1}\otimes q_{1}\right)\left(\pi_{U}\otimes\pi_{U}\right)c_{U}\left(u\otimes v\right)=\left(\vartheta_{U}^{1}\otimes\vartheta_{U}^{1}\right)c\left(u\otimes v\right).\end{eqnarray*}
By the universal property of the braided tensor algebra there exists
a graded braided bialgebra homomorphism $\vartheta_{U}:T\left(V,c\right)\rightarrow\mathfrak{G}\left(U\right)$
that restricted to $V$ yields $\vartheta_{U}^{1}$. Since $\mathfrak{G}\left(U\right)$
is generated as a $K$-algebra by $\mathfrak{G}^{1}\left(U\right)=\mathrm{Im}\left(\vartheta_{U}^{1}\right)$,
we infer that $\vartheta_{U}$ is surjective.\end{proof}
\begin{prop}
\label{pro:omegaB}Let $\left(V,c,b\right)$ be a braided Lie algebra
and set $U:=U(V,c,b)$. Let $\vartheta_{U}:T\left(V,c\right)\rightarrow\mathfrak{G}\left(U\right)$
be the map of Proposition \ref{pro: induced filtration}. Then $\xi_{U}\vartheta_{U}:T\left(V,c\right)\rightarrow\mathrm{gr}U$
is the unique graded braided bialgebra homomorphism lifting the map
$\chi_{U}^{1}:V\rightarrow U_{1}/U_{0}=:v\mapsto\pi_{U}\left(v\right)+U_{0}$.
Moreover $\xi_{U}\vartheta_{U}$ quotients to an injective braided
bialgebra homomorphism $\chi_{U}:\mathcal{B}\left(V,c\right)\rightarrow\mathrm{gr}U$
i.e. \eqref{diag:chi} commutes.\end{prop}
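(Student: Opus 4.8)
The plan is to build $\chi_U$ by factoring $\xi_U\vartheta_U$ through the defining projection $\Omega:T(V,c)\to\mathcal B(V,c)$, and then to prove injectivity via Lemma \ref{lem:Heyneman-Radford}. First I would establish the uniqueness and lifting claim: by Proposition \ref{pro: induced filtration}, $\vartheta_U$ is the unique graded braided bialgebra map $T(V,c)\to\mathfrak G(U)$ restricting to $\vartheta_U^1:v\mapsto\pi_U(v)+U_{(0)}$ on $V$, so composing with the graded braided bialgebra map $\xi_U$ of Proposition \ref{pro:CoradicalFiltration} gives a graded braided bialgebra map $\xi_U\vartheta_U:T(V,c)\to\mathrm{gr}U$. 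On $V$ it sends $v$ to $\xi_U(\pi_U(v)+U_{(0)})=\pi_U(v)+U_0$, i.e. to $\chi_U^1(v)$. Uniqueness is then the universal property of the braided tensor algebra together with the fact that $\mathrm{gr}U$ is generated in degree one as an algebra (it is strictly graded, being connected, by Remark \ref{rem:strictisprimgen}), so any graded braided bialgebra map lifting $\chi_U^1$ agrees with $\xi_U\vartheta_U$ on $V$ and hence everywhere.

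Next I would show $\xi_U\vartheta_U$ kills the ideal defining $\mathcal B(V,c)$, so that it quotients through $\Omega$. Recall from Definition \ref{def:SymAlg} that $\mathcal B(V,c)=S^{[\infty]}$ is obtained from $T(V,c)$ by successively dividing out the homogeneous primitive elements of degree $\geq 2$. Since $\mathrm{gr}U$ is strictly graded as a coalgebra (Remark \ref{rem:strictisprimgen}), its only homogeneous primitives sit in degree one, so $\xi_U\vartheta_U$ must annihilate every homogeneous primitive of $T(V,c)$ of degree $\geq 2$; being a braided bialgebra map, it therefore annihilates the two-sided (braided bi-)ideal they generate, and by induction on the rank it annihilates the whole kernel of $\Omega$. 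Hence there is a braided bialgebra map $\chi_U:\mathcal B(V,c)\to\mathrm{gr}U$ with $\chi_U\Omega=\xi_U\vartheta_U$, which is exactly the commutativity of \eqref{diag:chi}. Since $\mathcal B(V,c)$ and $\mathrm{gr}U$ are graded with $\chi_U$ graded, $\chi_U$ is a graded braided bialgebra homomorphism.

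For injectivity I would invoke Lemma \ref{lem:Heyneman-Radford}: $\mathcal B(V,c)$ is connected (it is a connected graded coalgebra), so it suffices to check that $\chi_U$ is injective on $P(\mathcal B(V,c))$. Since $\mathcal B(V,c)$ is strictly graded, $P(\mathcal B(V,c))=\mathcal B^1(V,c)=V$, identified via $\Omega$, and on this space $\chi_U$ acts as $\chi_U^1:V\to U_1/U_0$, $v\mapsto\pi_U(v)+U_0$. The kernel of $\chi_U^1$ is $(\pi_U(V)\cap U_0)/0 $; but $U$ is connected and $(V,c,b)$ is a braided Lie algebra, so by Remark \ref{rem:Milnor-Moore} the map $i_U:V\to U$ identifies $V$ with $P(U)$, which meets $U_0=K1_U$ trivially. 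Hence $\chi_U^1$ is injective, and Lemma \ref{lem:Heyneman-Radford} gives injectivity of $\chi_U$.

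The main obstacle I anticipate is the middle step — verifying cleanly that $\xi_U\vartheta_U$ factors through $\Omega$ — because it requires knowing that the kernel of $\Omega$ is exactly the ideal generated (over all ranks) by homogeneous primitives of degree $\geq 2$ in the successive quotients $S^{[n]}$, and matching this with the fact that $\mathrm{gr}U$ has no higher-degree primitives. One must be a little careful that "primitive of degree $\geq 2$ in $S^{[n]}$ maps to zero" propagates correctly through the tower $T(V,c)=S^{[0]}\to S^{[1]}\to\cdots$; the cleanest route is probably to argue directly that $\ker(\xi_U\vartheta_U)$ is a braided bi-ideal containing all homogeneous primitives of degree $\geq 2$ of $T(V,c)$ and then appeal to the iterative construction of $\mathcal B(V,c)$, or to use the universal (couniversal) property of the Nichols algebra among connected strictly graded braided bialgebras generated in degree one.
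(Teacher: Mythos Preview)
Your proposal is correct and follows essentially the same route as the paper: both factor $\xi_U\vartheta_U$ through the tower $S^{[0]}\to S^{[1]}\to\cdots\to\mathcal B(V,c)$ by using that $\mathrm{gr}U$ has no homogeneous primitives in degree $\geq 2$ (Remark~\ref{rem:strictisprimgen}), and then obtain injectivity from Lemma~\ref{lem:Heyneman-Radford} via the identification $P(\mathcal B(V,c))\cong V$ and the injectivity of $i_U$. One small slip to clean up: uniqueness of the lift comes simply from $T(V,c)$ being generated by $V$ as an algebra, not from $\mathrm{gr}U$ being generated in degree one---Remark~\ref{rem:strictisprimgen} only says $\mathrm{gr}U$ is strictly graded \emph{as a coalgebra}, which is a different statement.
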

\begin{proof}
Let $\left(U_{\left(n\right)}\right)_{n\in\mathbb{N}}$ be the standard
filtration on $U$. By Proposition \ref{pro: induced filtration},
$\vartheta_{U}:T\left(V,c\right)\rightarrow\mathfrak{G}\left(U\right)$
is surjective and lifts the map $\vartheta_{U}^{1}$. Then $\xi_{U}\vartheta_{U}:T\left(V,c\right)\rightarrow\mathrm{gr}U$
is the unique graded braided bialgebra homomorphism from $T\left(V,c\right)$
to $\mathrm{gr}U$ which lifts the map $\chi_{U}^{1}$. Let $S^{[n]}$
be the symmetric algebra of rank $n$ of $\left(V,c\right)$ as in
Definition \ref{def:SymAlg}. Now, any homogeneous primitive element
of degree greater than one in $T\left(V,c\right)$ goes via $\xi_{U}\vartheta_{U}$
in a primitive element of the same degree in $\mathrm{gr}U$. Such
an element is zero as nonzero primitive elements in $\mathrm{gr}U$
are concentrated in degree one (see Remark \ref{rem:strictisprimgen}). Thus $\chi_{U}^{\left[0\right]}=\xi_{U}\pi:S^{\left[0\right]}\rightarrow\mathrm{gr}U$
quotients to a graded braided bialgebra homomorphism $\chi_{U}^{\left[1\right]}:S^{\left[1\right]}\rightarrow\mathrm{gr}U$.
 By the same argument, $\chi_{U}^{\left[1\right]}$ sends to zero
all primitive elements of degree grater then one in $S^{\left[1\right]}$
so that $\chi_{U}^{\left[1\right]}$ quotients to $\chi_{U}^{\left[2\right]}:S^{\left[2\right]}\rightarrow\mathrm{gr}U$
and so on. Finally one gets a graded braided bialgebra homomorphism
$\chi_{U}=\chi_{U}^{\left[\infty\right]}:S^{\left[\infty\right]}=\mathcal{B}\left(V,c\right)\rightarrow\mathrm{gr}U$.
Note that $P\left(\mathcal{B}\left(V,c\right)\right)$ identifies
with $\left(V,c\right)$ via the canonical injection so that $\chi_{B}$
restricted to $P\left(\mathcal{B}\left(V,c\right)\right)$ is the
map $\chi_{U}^{1}$. Since $\pi_{U}\left(v\right)\in P\left(U\right)$, for all $v\in V$,
we have $\mathrm{Ker}(\chi_{U}^{1})=\mathrm{Ker}(\left(\pi_{U})_{\mid V}\right)=0$.
Thus, by Lemma \ref{lem:Heyneman-Radford}, $\chi_{U}$ is injective. \end{proof}
\begin{defn}
\label{def:PBW type} Let $\left(V,c,b\right)$ be a braided bialgebra
and set $U:=U(V,c,b)$. Inspired by \cite[Definition, page 316]{Braverman-Gaitsgory},
we will say that $U$ is of \textbf{Poincaré-Birkhoff-Witt (PBW) type}
whenever the canonical projection $\vartheta_{U}:T\left(V,c\right)\rightarrow\mathfrak{G}\left(U\right)$
of Proposition \ref{pro: induced filtration} quotients to a braided
bialgebra isomorphism $\omega_{U}:\mathcal{B}\left(V,c\right)\rightarrow\mathfrak{G}\left(U\right)$
(compare with \cite[page 92]{Humphreys} for motivating this terminology)
i.e. \eqref{diag:omega} commutes.
\end{defn}
Next aim is to provide a large class of braided vector spaces which
give rise to universal enveloping algebras of PBW type.
\begin{rem}
\label{rem:Universal}Let $\left(V,c,b\right)$ be a braided Lie algebra.
Assume that $\left(V,c\right)$ has combinatorial rank at most one,
see Definition \ref{def:SymAlg}. Let $\beta:E\left(V,c\right)\rightarrow V$
be the restriction of $b^{[0]}$ to the space $E\left(V,c\right)$ spanned
by primitive elements of $T\left(V,c\right)$ of homogeneous degree
at least two. By \cite[Theorem 5.10]{Ardizzoni-MMPrim}, $\left(V,c,\beta\right)$
is a braided Lie algebra in the sense of \cite[Definition 4.1]{Ardizzoni-Universal}
and the corresponding universal enveloping algebra \[
\mathbb{U}\left(V,c,\beta\right):=\frac{T(V,c)}{\left((\mathrm{Id}-\beta)[E\left(V,c\right)]\right)}\]
coincides with $U:=U\left(V,c,b\right)$ (the class $\mathcal{S}$,
appearing in \cite[Theorem 5.10]{Ardizzoni-MMPrim}, is exactly the
class of braided vector spaces of combinatorial rank at most one). \end{rem}
\begin{thm}
\label{thm:comb1PBW}Let $\left(V,c,b\right)$ be a braided Lie algebra.
If $\left(V,c\right)$ has combinatorial rank at most one, then $U\left(V,c,b\right)$
is of PBW type.\end{thm}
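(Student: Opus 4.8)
The plan is to show directly that $\omega_U\colon\mathcal B(V,c)\to\mathfrak G(U)$ exists and is bijective. By Proposition \ref{pro: induced filtration} the map $\vartheta_U\colon T(V,c)\to\mathfrak G(U)$ is a surjective graded braided bialgebra homomorphism lifting $\vartheta_U^1$. Since $\mathfrak G(U)$ is connected (being a filtered connected braided bialgebra quotient, cf. Lemma \ref{lem: induced filtration} and Lemma \ref{lem: graded of filtered}), exactly as in the proof of Proposition \ref{pro:omegaB} every homogeneous primitive element of degree $\geq 2$ in $T(V,c)$ maps to a primitive element of the same degree in $\mathfrak G(U)$; but nonzero primitives in a connected graded braided bialgebra which is strictly graded as a coalgebra live only in degree one. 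Hence $\vartheta_U$ kills all homogeneous primitives of degree $\geq 2$, so it factors successively through $S^{[1]},S^{[2]},\dots$, and in the limit through $S^{[\infty]}=\mathcal B(V,c)$. This gives the surjective graded braided bialgebra homomorphism $\omega_U\colon\mathcal B(V,c)\to\mathfrak G(U)$ making \eqref{diag:omega} commute, with no hypothesis needed so far. It remains to prove $\omega_U$ is injective, and this is where the combinatorial rank hypothesis enters.

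For injectivity I would use Lemma \ref{lem:Heyneman-Radford}: since $\mathcal B(V,c)$ is connected, it suffices to check that $\omega_U$ is injective on $P(\mathcal B(V,c))$, which (by Definition \ref{def:SymAlg}, since the Nichols algebra is strictly graded) is exactly $V$ sitting in degree one, and on which $\omega_U$ restricts to $\vartheta_U^1\colon v\mapsto \pi_U(v)+U_{(0)}$. So injectivity of $\omega_U$ reduces to showing $\vartheta_U^1$ is injective, i.e. that $\pi_U(v)\in U_{(0)}=K$ forces $v=0$; equivalently, that $\pi_U(V)\cap K=0$, i.e. $i_U\colon V\to U$ is injective (which holds since $(V,c,b)$ is a braided Lie algebra, cf. Remark \ref{rem:Milnor-Moore}) together with the statement that no nonzero element of $\pi_U(V)$ is scalar. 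Actually the cleaner route is: $\mathrm{Ker}(\vartheta_U^1)=\{v\in V : \pi_U(v)\in U_{(0)}\}$; but $\pi_U(v)$ is primitive, and $U_{(0)}\cap P(U)=0$ since $U$ is connected, so $\mathrm{Ker}(\vartheta_U^1)=\mathrm{Ker}((\pi_U)_{\mid V})=\mathrm{Ker}(i_U)=0$ because $(V,c,b)$ is a braided Lie algebra. Thus $\omega_U$ is injective — but wait, this argument used nothing about combinatorial rank either, so it only proves injectivity of $\omega_U$ on $V$, which is automatically enough for $\mathfrak G(U)$-injectivity by Lemma \ref{lem:Heyneman-Radford}. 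The subtlety I am glossing is that Lemma \ref{lem:Heyneman-Radford} gives injectivity of $\omega_U$, hence that $\vartheta_U$ factors through $\mathcal B(V,c)$ \emph{isomorphically onto its image} — but to conclude $\omega_U$ is an isomorphism we still need surjectivity, which we have, so in fact the only real content is surjectivity of $\vartheta_U$ (automatic) plus injectivity on $V$ (automatic from the braided Lie algebra hypothesis). This suggests the combinatorial rank hypothesis is needed to guarantee that $\vartheta_U$ really does kill \emph{all} of the defining ideal of $\mathcal B(V,c)$ — or rather, that the ideal defining $\mathfrak G(U)$ as a quotient of $T(V,c)$ is contained in the ideal defining $\mathcal B(V,c)$, which is the genuinely nontrivial inclusion.

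So let me restructure: the true crux is that $\mathrm{Ker}(\vartheta_U)\subseteq\mathrm{Ker}(\Omega)$, i.e. the ideal defining $\mathfrak G(U)$ lies inside the ideal defining $\mathcal B(V,c)$ — equivalently $\xi_U\vartheta_U=\chi_U\Omega$ with $\chi_U$ injective forces $\mathrm{Ker}(\vartheta_U)=\mathrm{Ker}(\Omega)$ \emph{provided} $\xi_U$ is injective. Indeed if $\xi_U$ is injective then from \eqref{diag:chi}, $\mathrm{Ker}(\vartheta_U)=\mathrm{Ker}(\xi_U\vartheta_U)=\mathrm{Ker}(\chi_U\Omega)=\mathrm{Ker}(\Omega)$ since $\chi_U$ is injective by Proposition \ref{pro:omegaB}; hence $\vartheta_U$ induces an isomorphism $\mathcal B(V,c)=T(V,c)/\mathrm{Ker}(\Omega)\to T(V,c)/\mathrm{Ker}(\vartheta_U)=\mathfrak G(U)$, i.e. $U$ is of PBW type. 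So the whole theorem reduces to: \emph{if $(V,c)$ has combinatorial rank at most one then $\xi_U\colon\mathfrak G(U)\to\mathrm{gr}U$ is bijective}, equivalently (by Proposition \ref{pro:CoradicalFiltration}) $U$ is strictly generated by $V$.

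Thus the plan is: (i) reduce, via diagram \eqref{diag:chi}, injectivity of $\chi_U$ (Proposition \ref{pro:omegaB}), and surjectivity of $\vartheta_U$ (Proposition \ref{pro: induced filtration}), the PBW-type property to bijectivity of $\xi_U$; (ii) invoke Proposition \ref{pro:CoradicalFiltration} to replace bijectivity of $\xi_U$ by, say, condition (6), that $U$ is strictly generated by $V$, or condition (7), that $P(\mathfrak G(U))=U_{(1)}/U_{(0)}$; (iii) prove that last statement using the combinatorial rank hypothesis. For step (iii) I would use Remark \ref{rem:Universal}: when $(V,c)$ has combinatorial rank at most one, $U=\mathbb U(V,c,\beta)=T(V,c)/((\mathrm{Id}-\beta)[E(V,c)])$, where $E(V,c)$ is spanned by homogeneous primitives of $T(V,c)$ of degree $\geq 2$. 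The standard filtration quotient $\mathfrak G(U)$ is then $T(V,c)$ modulo the associated graded of the ideal $((\mathrm{Id}-\beta)[E(V,c)])$; since $\beta$ strictly lowers degree, the top-degree part of a relation $x-\beta(x)$ (for $x\in E(V,c)$ homogeneous) is just $x$ itself, so the associated graded ideal is precisely the ideal generated by $E(V,c)$, which is exactly $\mathrm{Ker}(\Omega)$ for combinatorial rank $\leq 1$ (namely $\mathcal B(V,c)=S^{[1]}=T(V,c)/(E(V,c))$). This identifies $\mathfrak G(U)$ with $\mathcal B(V,c)$ and gives both bijectivity of $\xi_U$ and the PBW-type conclusion simultaneously. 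The main obstacle I anticipate is step (iii): making rigorous the claim that the associated graded of the ideal $((\mathrm{Id}-\beta)[E(V,c)])$ with respect to the standard filtration equals the ideal $(E(V,c))$ — one inclusion ($\supseteq$, coming from top-degree terms) is easy, but the reverse inclusion, that no \emph{lower}-degree relations are created in the associated graded beyond those already in $(E(V,c))$, is exactly the combinatorial-rank-one input and must be argued carefully, presumably by showing $\mathfrak G(U)$ has no primitives in degree $\geq 2$ (using that $\mathfrak G(U)$ surjects onto $\mathrm{gr}U$ via $\xi_U$ and that $\chi_U$ is injective, combined with the structure of $S^{[1]}$), which then forces $\mathfrak G(U)=S^{[1]}=\mathcal B(V,c)$.
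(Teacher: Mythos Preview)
Your proposal is essentially correct and more self-contained than the paper's proof, but you have not recognized that you already possess a complete argument and are instead chasing a superfluous step.

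The paper's proof is very short: it invokes \cite[Theorem 5.4]{Ardizzoni-Universal}, which asserts directly that for $U=\mathbb U(V,c,\beta)$ the map $\theta\colon S(V,c)=S^{[1]}\to\mathfrak G(U)$ is an isomorphism, and then uses the combinatorial-rank-$\leq 1$ hypothesis only to identify $S^{[1]}=\mathcal B(V,c)$ and $\pi_S=\Omega$. No internal argument is given for the cited isomorphism. Your approach, by contrast, attempts to prove that isomorphism from scratch via the presentation $U=T(V,c)/I$ with $I=\bigl((\mathrm{Id}-\beta)[E(V,c)]\bigr)$ from Remark~\ref{rem:Universal}. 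This is a genuinely different, more elementary route.

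The pieces you need are already in your write-up, but scattered. Your first paragraph's argument (killing primitives in $\mathfrak G(U)$ \`a la Proposition~\ref{pro:omegaB}) is indeed circular: that proposition works because $\mathrm{gr}U$ is strictly graded (Remark~\ref{rem:strictisprimgen}), whereas strict gradedness of $\mathfrak G(U)$ is condition~(7) of Proposition~\ref{pro:CoradicalFiltration}, i.e.\ exactly what you want. However, your final paragraph's ``easy inclusion'' $\mathrm{gr}(I)\supseteq (E(V,c))$ is correct and already suffices: it shows $\vartheta_U$ factors through $T(V,c)/(E(V,c))=S^{[1]}$, and since the combinatorial rank is at most one this \emph{is} $\mathcal B(V,c)$, so $\omega_U$ exists and is surjective. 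Then your second paragraph's Heyneman--Radford argument is correct and gives injectivity of $\omega_U$, since $\omega_U\mid_V=\vartheta_U^1$ and $\mathrm{Ker}(\vartheta_U^1)=\mathrm{Ker}(i_U)=0$ by the braided-Lie-algebra hypothesis. That is a complete proof.

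The ``hard inclusion'' $\mathrm{gr}(I)\subseteq(E(V,c))$ you worry about at the end is a \emph{consequence} of the theorem, not a prerequisite; you do not need to establish it directly. The combinatorial-rank hypothesis enters exactly once, at the identification $S^{[1]}=\mathcal B(V,c)$, which is where both your argument and the paper's argument use it.
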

\begin{proof}
By \cite[Theorem 5.4]{Ardizzoni-Universal}, $\mathbb{U}\left(V,c,\beta\right)$
is of PBW type in the sense of \cite[Definition 4.14]{Ardizzoni-Universal},
i.e. the projection $\theta:S(V,c)\rightarrow\mathfrak{G}\left(U\right)$,
that makes the diagram\[
\xymatrix@R=10pt{T(V,c)\ar[rr]^{\pi_{S}}\ar[rd]_{\vartheta_{U}} &  & S(V,c)\ar@{.>}[ld]^{\theta}\\
 & \mathfrak{G}(U)}
\]
commutative, is indeed an isomorphism (here $S(V,c)=S^{[1]}$ as in
Definition \ref{def:SymAlg} and $\pi_{S}$ denotes the canonical
projection). Now, since $\left(V,c\right)$ has combinatorial rank
at most one, we have that $S(V,c)=\mathcal{B}\left(V,c\right)$ and
$\pi_{S}=\Omega$. Thus $U$ is PBW type in the sense of Definition
\ref{def:PBW type}.\end{proof}
\begin{example}
\label{ex:ClassicLie}Assume $\mathrm{char}K=0$. Let $L$ be an
ordinary Lie algebra. Consider $L$ as a braided vector space through
the canonical flip map $c:L\otimes L\rightarrow L\otimes L,c(x\otimes y)=y\otimes x$. 

By \cite[Example 6.10]{Ardizzoni-MMPrim}, there exists a bracket
$b$ on $(L,c)$ such that $\left(L,c,b\right)$ is a braided Lie
algebra and $U(L,c,b)$ coincide with the ordinary universal enveloping
algebra $U:=U(L)$. Now, since $\mathrm{char}K=0$, we have that $\left(V,c\right)$
has combinatorial rank at most one (cf. \cite[Theorem 6.13]{Ardizzoni-Sdeg}).
By Theorem \ref{thm:comb1PBW}, $U(L,c,b)$ is of PBW type. Hence
there is braided bialgebra isomorphism $\omega_{U}:\mathcal{B}\left(L,c\right)\rightarrow\mathfrak{G}\left(U\right)$
such that \eqref{diag:omega} commutes. Now, since $\left(L,c\right)$
has combinatorial rank at most one, we have that $\mathcal{B}\left(L,c\right)=S(L)$
(cf. \cite[Remark 6.14]{Ardizzoni-SFFS}). Hence, the fact that $U(L,c,b)$
is of PBW type just means that the classical PBW theorem holds, see
\cite[page 92]{Humphreys}. \end{example}
\begin{example}
\label{exa:RestrictedLie}Assume $\mathrm{char}K$ is a prime number
$p$. Let $V$ be a vector space regarded as a braided vector space
through the canonical flip map $c$. Consider the restricted symmetric
algebra of $V$:\[
\mathfrak{s}(V):=\frac{T(V)}{(xy-yx,x^{p}\mid x,y\in V)}.\]
Now $\mathfrak{s}(V)$ is nothing but the restricted enveloping algebra
of the trivial restricted Lie algebra $V$ so that, by \cite[Theorem 6.11]{Milnor-Moore},
we have that $P(\mathfrak{s}(V))\cong V$. Since $\mathfrak{s}(V)$
is obtained dividing out $T(V,c)$ by elements in $E(V,c)$, by \cite[Theorem 6.1 and Remark 4.3]{Ardizzoni-Sdeg},
$\left(V,c\right)$ has combinatorial rank at most one and $\mathfrak{s}(V)=\mathcal{B}\left(V,c\right)$.

Let $b$ be such that $(V,c,b)$ is a braided Lie algebra. By Remark
\ref{rem:Universal}, \[
U\left(V,c,b\right)=\mathbb{U}\left(V,c,\beta\right)=\frac{T(V,c)}{\left((\mathrm{Id}-\beta)[E\left(V,c\right)]\right)}.\]
Since $xy-yx,x^{p}\in E(V,c)$, for all $x,y\in V$, and the domain
of $\beta$ is $E\left(V,c\right)$, it makes sense to set \begin{eqnarray*}
[x,y] & := & \beta(xy-yx),\qquad x^{[p]}:=\beta(x^{p}).\end{eqnarray*}
This defines two maps $[-,-]:V\otimes V\rightarrow V$ and $-{}^{[p]}:V\rightarrow V$.
It is straightforward to check that $(V,[-,-],-{}^{[p]})$ is a restricted
Lie algebra, see \cite[Definition 4, page 187]{Jacobson-LieAlg}.
{}Consider the restricted enveloping algebra of $V$: \[
\mathfrak{u}(V):=\frac{T(V)}{\left(xy-yx-[x,y],x^{p}-x^{[p]}\mid x,y\in V\right)}.\]
Clearly there exists a projection $\lambda:\mathfrak{u}(V)\rightarrow U\left(V,c,b\right)$.
Since, by \cite[Theorem 6.11]{Milnor-Moore}, we have $P(\mathfrak{u}(V))\cong V$,
we can apply Lemma \ref{lem:Heyneman-Radford} to conclude that $\lambda$
is bijective whence $\mathfrak{u}(V)=U\left(V,c,b\right)$.

Conversely, let $[-,-]:V\otimes V\rightarrow V$ and $-{}^{[p]}:V\rightarrow V$
be such that $(V,[-,-],-{}^{[p]})$ is a restricted Lie algebra and
set $A:=\mathfrak{u}(V)$. This is an ordinary Hopf algebra, see \cite[page 23]{Montgomery}.
In particular it is a braided bialgebra with braiding the canonical
flip map on $A$. It is primitively generated as, by construction
it is generated by the image of $V$ in $A$. By Remark \ref{rem:Milnor-Moore},
$A\cong U\left(P,c_{P},b_{P}\right)$ where $\left(P,c_{P},b_{P}\right)$
is the infinitesimal braided Lie algebra of $A$. 

Now the canonical map $\sigma:V\rightarrow P=P\left(A\right)$ is
bijective (cf. \cite[Theorem 6.11]{Milnor-Moore}). Since $c_{P}$
is the restriction of the braiding of $A$, then $c_{P}$ is the canonical
flip map on $P$. Thus $\sigma:(V,c)\rightarrow(P,c_{P})$ is an isomorphism
of braided vector spaces, where $c$ is the canonical flip map on
$V$. Hence we can endow $(V,c)$ with a bracket $b$ such that $(V,c,b)$
is a braided Lie algebra and $\sigma:(V,c,b)\rightarrow(P,c_{P},b_{P})$
is an isomorphism of braided Lie algebras. Hence $A\cong U:=U\left(V,c,b\right)$.
By the foregoing, $(V,c)$ has combinatorial rank at most one so that,
by Theorem \ref{thm:comb1PBW}, $U$ is of PBW type. Hence there is
a braided bialgebra isomorphism $\omega_{U}:\mathcal{B}\left(V,c\right)\rightarrow\mathfrak{G}\left(U\right)$
such that \eqref{diag:omega} commutes. Since, by the initial part,
one has that $\mathcal{B}\left(V,c\right)=\mathfrak{s}(V)$, then
we get an isomorphism $\omega_{U}:\mathfrak{s}(V)\rightarrow\mathfrak{G}\left(\mathfrak{u}(V)\right)$
which is a PBW theorem for restricted enveloping algebras, see \cite[Proposition 6.12]{Milnor-Moore}. 
\end{example}
\section{Basis for the universal enveloping algebra \label{sec:PBWbasis}}

In this section, we will discuss the problem of determining a basis
for a universal enveloping algebra of PBW type.
\begin{lem}
\label{lem:Dragos} Let $A,$ $B,$ $C$ be vector spaces, and let
$W\leq A$ and $B^{\prime}\leq B$ be two vector subspaces. Let $f,$
$g,$ $p$ and $q$ be $K$-linear maps as in the following commutative
diagram: \[
\xymatrix{A\ar[d]_{p}\ar[r]^{f} & B\ar[d]^{q}\\
C\ar[r]^{g} & \frac{B}{B^{\prime}}}
\]
 where $q$ is the canonical projection. If $g$ and $p|_{W}:W\rightarrow C$
are isomorphisms then $f(W)\oplus B^{\prime}=B$. Moreover $f_{\mid W}$
is injective.\end{lem}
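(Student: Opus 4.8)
\textbf{Proof plan for Lemma \ref{lem:Dragos}.}

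The plan is to chase the commutative diagram, exploiting that $q$ is surjective with kernel $B'$ and that both $g$ and $p|_W$ are isomorphisms. First I would show $f(W)+B'=B$. Take $b\in B$. Then $q(b)\in B/B'$, and since $g$ is surjective there is $c\in C$ with $g(c)=q(b)$; since $p|_W$ is surjective there is $w\in W$ with $p(w)=c$. By commutativity $q(f(w))=g(p(w))=g(c)=q(b)$, so $b-f(w)\in\ker q=B'$, hence $b\in f(w)+B'\subseteq f(W)+B'$. This gives $B=f(W)+B'$.

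Next I would prove the sum is direct, i.e. $f(W)\cap B'=0$. Let $b=f(w)\in f(W)\cap B'$ with $w\in W$. Then $q(b)=0$, so $g(p(w))=q(f(w))=0$; since $g$ is injective, $p(w)=0$; since $p|_W$ is injective, $w=0$, hence $b=f(w)=0$. Therefore $f(W)\oplus B'=B$.

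Finally, injectivity of $f_{\mid W}$ follows from the same computation: if $f(w)=0$ for $w\in W$, then in particular $f(w)\in f(W)\cap B'$, so by the directness just established (or directly: $g(p(w))=q(f(w))=0$ forces $p(w)=0$ forces $w=0$) we get $w=0$.

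This is entirely routine and there is no real obstacle; the only thing to be careful about is using each hypothesis exactly where needed — surjectivity of $g$ and $p|_W$ for the spanning statement, injectivity of $g$ and $p|_W$ for the directness and for the injectivity of $f_{\mid W}$.
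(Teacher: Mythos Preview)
Your proof is correct and is essentially the same diagram chase the paper has in mind: the paper cites \cite[Lemma 4.21]{AMS-MM} for the decomposition $f(W)\oplus B'=B$ (which is precisely your first two paragraphs) and then proves injectivity of $f_{\mid W}$ by the identical argument you give. There is nothing to add.
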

\begin{proof}
The first part of the statement is exactly \cite[Lemma 4.21]{AMS-MM}.
Let $w\in W\cap\mathrm{Ker}f$. Then $gp\left(w\right)=qf\left(w\right)=0$.
Since $g$ is bijective, we get $p\left(w\right)=0$. Since $p|_{W}:W\rightarrow C$
is an isomorphism, we conclude that $w=0$.\end{proof}
\begin{prop}
\label{pro:PBWbasis}Let $\left(V,c,b\right)$ be a braided Lie algebra
and set $U:=U(V,c,b)$. Let $\Omega:T\left(V,c\right)\rightarrow\mathcal{B}\left(V,c\right)$
be the canonical projection and denote by $\Omega^{n}:V^{\otimes n}\rightarrow\mathcal{B}\left(V,c\right)^{n}$
the $n$th graded component of $\Omega$. Assume that $U$ is of PBW
type and let $W_{n}$ be a vector subspace of $V^{\otimes n}$ such
that $\Omega_{\mid W_{n}}^{n}$ is an isomorphism. Then $U_{\left(n\right)}=U_{\left(n-1\right)}\oplus\pi_{U}\left(W_{n}\right)$, where $\left(U_{\left(n\right)}\right)_{n\in\mathbb{N}}$ is the
standard filtration on $U$. Moreover $\pi_{U\mid W_{n}}$
is injective.\end{prop}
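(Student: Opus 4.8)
The plan is to deduce everything from Lemma \ref{lem:Dragos} applied in each degree. Since $U$ is of PBW type, by Definition \ref{def:PBW type} there is a braided bialgebra isomorphism $\omega_{U}:\mathcal{B}(V,c)\to\mathfrak{G}(U)$ with $\vartheta_{U}=\omega_{U}\Omega$, where $\vartheta_{U}$ is the map of Proposition \ref{pro: induced filtration}. Since $\Omega$ and $\vartheta_{U}$ are graded and $\Omega$ is surjective, for $y\in\mathcal{B}(V,c)^{n}$ we have $\omega_{U}(y)=\vartheta_{U}(x)\in\mathfrak{G}^{n}(U)$ whenever $x\in V^{\otimes n}$ satisfies $\Omega(x)=y$; hence $\omega_{U}$ is a \emph{graded} isomorphism and each graded component $\omega_{U}^{n}:\mathcal{B}(V,c)^{n}\to\mathfrak{G}^{n}(U)=U_{(n)}/U_{(n-1)}$ is a linear isomorphism. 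This is the fact that will play the role of the isomorphism $g$ in Lemma \ref{lem:Dragos}.

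Next I would fix $n\in\mathbb{N}$ and write $f:V^{\otimes n}\to U_{(n)}$ for the restriction of $\pi_{U}$ (valid since $\pi_{U}(V^{\otimes n})\subseteq\pi_{U}(T_{(n)})=U_{(n)}$) and $q:U_{(n)}\to U_{(n)}/U_{(n-1)}=\mathfrak{G}^{n}(U)$ for the canonical projection. The key identification is that the $n$th graded component $\vartheta_{U}^{n}$ of $\vartheta_{U}$ is exactly $v\mapsto\pi_{U}(v)+U_{(n-1)}$, i.e.\ $\vartheta_{U}^{n}=q\circ f$. This follows from Proposition \ref{pro: induced filtration}: $\vartheta_{U}$ is the graded algebra map lifting $v\mapsto\pi_{U}(v)+U_{(0)}$, and since $\pi_{U}$ is multiplicative, on a product $v_{1}\cdots v_{n}$ with $v_{i}\in V$ one computes $\vartheta_{U}^{n}(v_{1}\cdots v_{n})=\pi_{U}(v_{1})\cdots\pi_{U}(v_{n})+U_{(n-1)}=\pi_{U}(v_{1}\cdots v_{n})+U_{(n-1)}$, and one concludes by linearity (equivalently, $\vartheta_{U}=\mathfrak{G}(\pi_{U})$ once $T(V,c)$ carries its standard filtration). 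Combining $\vartheta_{U}^{n}=q\circ f$ with $\vartheta_{U}=\omega_{U}\Omega$ gives $q\circ f=\omega_{U}^{n}\circ\Omega^{n}$ on $V^{\otimes n}$, that is, the diagram
\[
\xymatrix{V^{\otimes n}\ar[d]_{\Omega^{n}}\ar[r]^{f} & U_{(n)}\ar[d]^{q}\\ \mathcal{B}(V,c)^{n}\ar[r]^{\omega_{U}^{n}} & U_{(n)}/U_{(n-1)}}
\]
commutes.

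Finally I would invoke Lemma \ref{lem:Dragos} with $A=V^{\otimes n}$, $B=U_{(n)}$, $C=\mathcal{B}(V,c)^{n}$, $B'=U_{(n-1)}$, $W=W_{n}$, $p=\Omega^{n}$, $g=\omega_{U}^{n}$, and $f,q$ as above. By hypothesis $\Omega_{\mid W_{n}}^{n}$ is an isomorphism, and $\omega_{U}^{n}$ is an isomorphism by the first paragraph, so the lemma yields $f(W_{n})\oplus U_{(n-1)}=U_{(n)}$, i.e.\ $U_{(n)}=U_{(n-1)}\oplus\pi_{U}(W_{n})$, and that $f_{\mid W_{n}}=\pi_{U\mid W_{n}}$ is injective. (For $n=0$ there is nothing to prove: $W_{0}=K=U_{(0)}$, $U_{(-1)}=0$, and $\pi_{U}$ is injective on $K$.) I do not expect a genuine obstacle: the only slightly delicate point is the bookkeeping identity $\vartheta_{U}^{n}=q\circ f$ together with the gradedness of $\omega_{U}$, both of which are immediate from the way $\vartheta_{U}$ and $\omega_{U}$ were constructed.
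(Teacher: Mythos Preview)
Your proof is correct and follows essentially the same approach as the paper: you apply Lemma~\ref{lem:Dragos} to the commutative square with vertices $V^{\otimes n}$, $U_{(n)}$, $\mathcal{B}(V,c)^{n}$, and $U_{(n)}/U_{(n-1)}$, exactly as the paper does. The only difference is that you spell out in detail why $\omega_{U}$ is graded and why the square commutes (via $\vartheta_{U}^{n}=q\circ f$), whereas the paper simply asserts the diagram and applies the lemma.
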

\begin{proof}
Clearly $\pi_{U}\left(V^{\otimes n}\right)\subseteq U_{\left(n\right)}$
so $\pi_{U}$ induces a map $\tilde{\pi}^{n}:V^{\otimes n}\rightarrow U_{\left(n\right)}$.
Apply Lemma \ref{lem:Dragos} to the following diagram \[
\xymatrix{V^{\otimes n}\ar[d]_{\Omega^{n}}\ar[r]^{\tilde{\pi}^{n}} & U_{\left(n\right)}\ar[d]^{q^{n}}\\
\mathcal{B}\left(V,c\right)^{n}\ar[r]^{\omega_{U}^{n}} & \frac{U_{\left(n\right)}}{U_{\left(n-1\right)}}}
\]
where $q^{n}:U_{\left(n\right)}\rightarrow U_{\left(n\right)}/U_{\left(n-1\right)}$
is the canonical projection and $\omega_{U}^{n}$ is the $n$th graded
component of the isomorphism $\omega_{U}:\mathcal{B}\left(V,c\right)\rightarrow\mathfrak{G}\left(U\right)$
of Definition \ref{def:PBW type}.\end{proof}
\begin{rem}
\label{rem:PBWbasis}Let $\left(V,c,b\right)$ be a braided Lie algebra
and set $U:=U\left(V,c,b\right)$. Assume that $U$ is of PBW type.
Set $\Theta:=\{n\in\mathbb{N}\mid\mathcal{B}\left(V,c\right)^{n}\neq\{0\}\}$
and suppose that, for each $n\in\Theta$, we can find a linearly independent
set $Z_{n}:=\left\{ v_{n,i}\mid i\in I_{n}\right\} $ consisting of
elements of $V^{\otimes n}$ with the property that $W_{n}:=\mathrm{Span}{}_{K}Z_{n}$
is such that $\Omega_{\mid W_{n}}^{n}$ is an isomorphism. By Proposition
\ref{pro:PBWbasis}, $U_{\left(n\right)}=U_{\left(n-1\right)}\oplus\pi_{U}(W_{n})$.
Moreover $\pi_{U\mid W_{n}}$ is injective. Therefore $\left\{ \pi_{U}\left(v_{n,i}\right)\mid n\in\Theta,i\in I_{n}\right\} $
is a basis for $U$. Hence, as in the classical case, finding a basis
for $U$ reduces to determine a suitable basis $\left\{ v_{n,i}\mid n\in\Theta,i\in I_{n}\right\} $
for $\mathcal{B}\left(V,c\right)$ as above. Results in this direction
are obtained by Kharchenko (cf. \cite[Theorem 2]{Kherchenko-Aquantum})
and more generally by Ufer (cf. \cite[Theorem 34]{Ufer}) when $(V,c)$
is a braided vector space of diagonal type or left triangular respectively.
\end{rem}
Next aim is show how the computation of a PBW basis for the classical
universal enveloping algebra and for the restricted enveloping algebra
fits into the theory above.
\begin{example}
\label{exa:PBWbasis-char0}Assume $\mathrm{char}K=0$. Let $L$ be
an ordinary Lie algebra which is assumed to have a totally ordered
basis $(X,\leq)$. By Example \ref{ex:ClassicLie}, the universal
enveloping algebra $U:=U(L)$ can be identified with the the universal
enveloping algebra $U(L,c,b)$ where $c$ is the canonical flip map
on $L$ and $b$ a suitable bracket for $(L,c)$. Moreover $(L,c)$
has combinatorial rank at most one and $\mathcal{B}\left(L,c\right)$
is the ordinary symmetric algebra $S(L)$. Consider the canonical
projection $\Omega:T\left(L\right)\rightarrow S\left(L\right)$. For
$n=0$, set $Z_{0}:=\{1_{K}\}$ and, for $n\geq1$, let $Z_{n}\subseteq L^{\otimes n}$
be the set \[
\left\{ x_{1}\cdot_{T}\cdots\cdot_{T}x_{n}\mid x_{i}\in X,\forall1\leq i\leq n\text{ and }x_{1}\leq x_{2}\leq\cdots\leq x_{n}\right\} .\]
If we set $W_{n}:=\mathrm{Span}{}_{K}Z_{n}$, it is clear that $\Omega_{\mid W_{n}}^{n}$
is an isomorphism. Hence, by Remark \ref{rem:PBWbasis}, The elements
$x_{1}\cdot_{U}\cdots\cdot_{U}x_{n}$, where $n\geq1$, $x_{i}\in X$,
for all $1\leq i\leq n$, and $x_{1}\leq x_{2}\leq\cdots\leq x_{n}$,
along with $1_K$, form a basis of the universal enveloping algebra
$U(L)$ of $L$. This theorem is due to Poincaré, Birkhoff and Witt
and the basis is called the PBW basis of the universal enveloping
algebra (see e.g. \cite[Corollary C, page 92]{Humphreys}). \end{example}
\begin{rem}
Keep the assumptions and notations of Example \ref{exa:PBWbasis-char0}.
Observe that proving the isomorphism $U(L)\cong U(L,c,b)$ requires
the condition $PU(L)\cong L$ (cf. \cite[Example 6.10]{Ardizzoni-MMPrim}).
One could object that, in order to check this isomorphism, a basis
of $U(L)$ is needed. We can clear the hurdle as follows. By \cite[page 92]{Humphreys},
$U(L)$ fulfills the PBW theorem. Now, mimicking the proof of \cite[Corollary 5.5]{Ardizzoni-Universal},
we arrive at $PU(L)\cong L$ without using a basis of $U(L)$.\end{rem}
\begin{example}
\label{exa:PBWbasis-charp}Assume $\mathrm{char}K$ is a prime number
$p$. Let $(L,[-,-],-{}^{[p]})$ be a restricted Lie algebra which
is assumed to have a totally ordered basis $(X,\leq)$. By Example
\ref{exa:RestrictedLie}, the restricted enveloping algebra $\mathfrak{u}:=\mathfrak{u}(L)$
can be identified with the the universal enveloping algebra $U(L,c,b)$
where $c$ is the canonical flip map on $L$ and $b$ a suitable bracket
for $(L,c)$. Moreover $(L,c)$ has combinatorial rank at most one.
Hence by Theorem \ref{thm:comb1PBW}, $U(L,c,b)$ is of PBW type.
Furthermore, by Example \ref{exa:RestrictedLie}, $\mathcal{B}\left(L,c\right)$
is the restricted symmetric algebra $\mathfrak{s}(L)$. Consider the
canonical projection $\Omega:T\left(L\right)\rightarrow\mathfrak{s}(L)$.
For $n=0$, set $Z_{0}:=\{1_{K}\}$ and, for $n\geq1$, let $Z_{n}\subseteq L^{\otimes n}$
be the set \[
\left\{ x_{1}^{t_{1}}\cdot_{T}\cdots\cdot_{T}x_{n}^{t_{n}}\mid x_{i}\in X,0\leq t_{i}\leq p-1,\forall1\leq i\leq n,t_{1}+\cdots+t_{n}=n\text{ and }x_{1}<x_{2}<\cdots<x_{n}\right\} .\]
Then $Z_{n}=\emptyset$ whenever $\mathcal{B}\left(L,c\right)^{n}=\{0\}$
and $Z_{n}$ is linearly independent otherwise. If we set $W_{n}:=\mathrm{Span}{}_{K}Z_{n}$,
it is clear that $\Omega_{\mid W_{n}}^{n}$ is an isomorphism for
all $n\in\mathbb{N}$. Hence, by Remark \ref{rem:PBWbasis}, The elements
$x_{1}^{t_{1}}\cdot_{\mathfrak{u}}\cdots\cdot_{\mathfrak{u}}x_{n}^{t_{n}}$,
where $n\geq1$, $x_{i}\in X,0\leq t_{i}\leq p-1$, for all $1\leq i\leq n$,
and $x_{1}<x_{2}<\cdots<x_{n}$, along with $1_K$, form a basis of
$\mathfrak{u}(L)$. This basis is called the PBW basis of the restricted
enveloping algebra (see e.g. \cite[page 23]{Montgomery}). 
\end{example}
We now give an example of a PBW basis for the universal enveloping
algebra of a braided Lie algebra whose bracket $c$ is not a symmetry
i.e. $c^{2}\neq\mathrm{Id}$.
\begin{example}
\label{exa:PBWbasis-Stumbo}Assume $\mathrm{char}K=0$. Let $V:=Kx_{1}\oplus Kx_{2}$
and define a diagonal braiding $c$ on $V$ by setting $c(x_{i}\otimes x_{j})=q_{ij}x_{j}\otimes x_{i}$,
where $q_{11}=\gamma\in K$ and $q_{ij}=1$ for all $(i,j)\neq(1,1)$.
Assume $\gamma$ is not a root of unity. In view of \cite[Example 9.8]{Ardizzoni-Universal},
the endomorphism $c$ has minimal polynomial $(X-\gamma)(X^{2}-1)$
(whence it is not a symmetry), the braided vector space $(V,c)$ has
combinatorial rank at most one and Nichols algebra $\mathcal{B}\left(V,c\right)=T(V,c)/(x_{2}x_{1}-x_{1}x_{2})$.
Consider the braided bialgebra $A:=T(V,c)/(x_{2}x_{1}-x_{1}x_{2}-x_{1})$.
Still by \cite[Example 9.8]{Ardizzoni-Universal}, we know that the
infinitesimal part of $A$ identifies with $(V,c)$. Thus, by Remark
\ref{rem:Milnor-Moore}, there is a braided bracket $b$ on $(V,c)$
such that $(V,c,b)$ is a braided Lie algebra and $A\cong U(V,c,b)$.
By Theorem \ref{thm:comb1PBW}, $U(V,c,b)$ is of PBW type.

For all $n\in\mathbb{N}$, set $Z_{n}:=\left\{ x_{1}^{t_{1}}\cdot_{T}x_{2}^{t_{2}}\mid t_{1},t_{2}\in\mathbb{N},t_{1}+t_{2}=n\right\} \subseteq V^{\otimes n}$
. If we set $W_{n}:=\mathrm{Span}{}_{K}Z_{n}$, it is clear that $\Omega_{\mid W_{n}}^{n}$
is an isomorphism for all $n\in\mathbb{N}$. Hence, by Remark \ref{rem:PBWbasis},
The elements $x_{1}^{t_{1}}\cdot_{A}x_{2}^{t_{2}}$, where $t_{1},t_{2}\in\mathbb{N}$,
form a basis of $A$. It is remarkable that $A$ is not a classical
universal enveloping algebra as its infinitesimal braiding is not
a symmetry. Note also that, as we will see in Theorem
\ref{teo:PBW}, since $U(V,c,b)$ is of PBW type then $U$ is strictly generated by $V$. Hence
the $n$th term of coradical filtration of $A$ is $\sum_{i=0}^{n}\pi_{A}\left(V\right)^{i}$,
where $\pi_{A}:T(V,c)\rightarrow A$ denotes the canonical projection.
Moreover, by Theorem \ref{thm:Lifting}, $A$ is what we will call a braided bialgebra
lifting of $\mathcal{B}\left(V,c\right)$.\end{example}

\section{Cosymmetric \label{sec:Cosymm}}

We will investigate universal enveloping algebras which are cosymmetric
in the sense of \cite[Definition 3.1]{Kharchenko- connected}.
\begin{defn}
Let $\left(C,\Delta{}_{C},\varepsilon{}_{C},c\right)$ be a connected
braided coalgebra. In view of \cite[Remark 1.12]{AMS-MM2}, the braiding
of $C$ induces a braiding $c_{P}$ on the space $P:=P\left(C\right)$
of primitive elements in $C$. Moreover $\left(P,c_{P}\right)$ is
a braided vector space. Let $1_{C}$ be the unique group-like element
of $C.$ Let $\phi:C\rightarrow C$ be defined by $\phi\left(c\right):=c-\varepsilon_{C}\left(c\right)1_{C}$
and let $\Delta_{C}^{n}:C\rightarrow C^{\otimes (n+1)}$ be the $n$th
iterated comultiplication of $C$. For $n\in \mathbb{N}$, let $C_{n}$ be the $n$th term
of the coradical filtration of $C$. By \cite[Lemma 2.2]{Kharchenko- connected}, which is deduced from \cite[Proposition 11.0.5]{Sw},
we have $\mathrm{Ker}(\phi^{\otimes (n+1)}\Delta_{C}^{n})=C_{n}$,
for all $n\in\mathbb{N}$. Moreover $\phi^{\otimes (n+1)}\Delta{}_{C}^{n}\left(C_{n+1}\right)\subseteq P^{\otimes (n+1)}$
so that, for all $n>0$, the restriction of $\phi^{\otimes n}\Delta{}_{C}^{n-1}$
quotients to a map $\mu_{n}:C_{n}/C_{n-1}\rightarrow P^{\otimes n}$.
Set $\mu_{0}:=\mathrm{Id}_{K}$. Then $\mu_{C}:=\oplus_{n\in\mathbb{N}}\mu_{n}:\mathrm{gr}C\rightarrow T^{c}\left(P,c_{P}\right)$
is called the \textbf{linearization map} and it is an injective coalgebra
map. Here $T^{c}\left(P,c_{P}\right)$ denotes the braided cotensor
coalgebra of $(P,c_{P})$ also known as quantum shuffle algebra and
denoted by $\mathrm{Sh}_{c_{P}}(P)$. The linearization map can equivalently
be constructed by means of the universal property of the cotensor
coalgebra $T^{c}\left(P\right)$ \cite[Proposition 12.1.1]{Sw}. Following
\cite[Definition 3.1]{Kharchenko- connected} we will say that a connected
braided bialgebra $C$ is \textbf{cosymmetric} if $\mathrm{Im}\left(\mu_{C}\right)\subseteq\mathcal{B}\left(V,c\right)$
(note that in \cite{Kharchenko- connected}, the Nichols algebra $\mathcal{B}\left(V,c\right)$
is denoted by $S_{c}(V)$). %
{}

For a connected braided bialgebra $B$, one has that the linearization
map $\mu_{B}:\mathrm{gr}B\rightarrow T^{c}\left(P,c_{P}\right)$ is
indeed a braided bialgebra map (cf. \cite[Proposition 3.3]{Kharchenko- connected}).\end{defn}
\begin{thm}
\label{teo:PBW}Let $\left(V,c,b\right)$ be a braided Lie algebra
and set $U:=U(V,c,b)$. The following assertions are equivalent.
\begin{enumerate}
\item $U$ is of PBW type.
\item $U$ is cosymmetric.
\item $U$ is strictly generated by $V$.
\item $\mathrm{gr}U$ is primitively generated.
\item The map $\chi_{U}:\mathcal{B}\left(V,c\right)\rightarrow\mathrm{gr}U$
of Proposition \ref{pro:omegaB} is bijective.
\end{enumerate}
\end{thm}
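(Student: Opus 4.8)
The strategy is to establish a cycle of implications linking the five conditions, leaning heavily on the diagrams \eqref{diag:chi} and \eqref{diag:omega} together with the graded braided bialgebra maps $\vartheta_U$, $\xi_U$, $\chi_U$ already constructed, and on the characterizations of strict generation collected in Proposition \ref{pro:CoradicalFiltration}. Recall that $\chi_U = \xi_U \circ \omega_U$ would hold if $\omega_U$ existed, and more precisely $\xi_U \vartheta_U$ factors through $\chi_U$; this is the algebraic backbone connecting (1), (3), (4), (5).

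First I would treat the core equivalence $(1)\Leftrightarrow(5)$, which is essentially formal once one recalls that $\omega_U$ exists iff $\vartheta_U$ kills exactly the kernel of $\Omega$, i.e. iff $\vartheta_U$ and $\Omega$ have the same kernel. Since $\Omega$ is surjective with $\mathcal{B}(V,c) = T(V,c)/\ker\Omega$, the induced $\omega_U$ is always surjective when it exists (as $\vartheta_U$ is surjective by Proposition \ref{pro: induced filtration}); injectivity of $\omega_U$ is then equivalent to $\ker\vartheta_U = \ker\Omega$. Now $\chi_U\Omega = \xi_U\vartheta_U$, so $\ker\Omega \subseteq \ker(\xi_U\vartheta_U)$ always, and $\chi_U$ is injective by Proposition \ref{pro:omegaB}; chasing the diagram, $\chi_U$ is bijective iff $\xi_U\vartheta_U$ is surjective iff $\xi_U$ is surjective. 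Combining this with the equivalence of (2)–(8) in Proposition \ref{pro:CoradicalFiltration}, surjectivity of $\xi_U$ is equivalent to bijectivity of $\xi_U$, which is equivalent to $U$ being strictly generated by $V$ — this handles $(3)\Leftrightarrow(5)$ — and also forces $\ker\vartheta_U = \ker\Omega$ (since $\xi_U$ injective means $\ker(\xi_U\vartheta_U) = \ker\vartheta_U$, which equals $\ker\Omega$), giving $(5)\Rightarrow(1)$; conversely $(1)\Rightarrow(5)$ since $\omega_U$ iso forces $\chi_U = \xi_U\omega_U$ with $\chi_U$ injective, so $\xi_U$ is injective on the image of $\omega_U = \mathfrak{G}(U)$, hence injective, hence bijective by Proposition \ref{pro:CoradicalFiltration}.

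Next I would close the loop through (4): $U$ strictly generated by $V$ means the coradical filtration equals the standard filtration, so $\mathrm{gr}U = \mathfrak{G}(U)$, which is generated in degree one by construction of $\vartheta_U$ (its image generates $\mathfrak{G}(U)$ as an algebra, and degree one of $\mathrm{gr}U$ is $P(\mathrm{gr}U)$ by Remark \ref{rem:strictisprimgen}); this gives $(3)\Rightarrow(4)$. Conversely, if $\mathrm{gr}U$ is primitively generated, then since $P(\mathrm{gr}U) = \mathrm{gr}^1 U = U_1/U_0$ (Remark \ref{rem:strictisprimgen}), the algebra $\mathrm{gr}U$ is generated by $U_1/U_0$, which forces $U_n = U_{(n)}$ inductively — this is condition (6) of Proposition \ref{pro:CoradicalFiltration} — hence $(4)\Rightarrow(3)$.

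The remaining and genuinely substantive link is $(2)$, cosymmetry, which is where the results of \cite{Kharchenko- connected} enter. The plan is to show $(5)\Leftrightarrow(2)$ using the linearization map $\mu_U : \mathrm{gr}U \to T^c(P, c_P)$, which by \cite[Proposition 3.3]{Kharchenko- connected} is a braided bialgebra map and is always injective. By definition $U$ is cosymmetric iff $\mathrm{Im}(\mu_U) \subseteq \mathcal{B}(V,c)$ (viewing $\mathcal{B}(V,c)$ inside $T^c(P,c_P)$ as the Nichols algebra, identifying $P = P(U) \cong V$ via Remark \ref{rem:Milnor-Moore}). Since $\mathcal{B}(V,c)$ is the image of the quantum symmetrization / the subalgebra generated by $P$, and since $\mu_U$ restricted to $P(\mathrm{gr}U)$ is the canonical inclusion, $\mu_U(\mathrm{gr}U) \subseteq \mathcal{B}(V,c)$ holds precisely when $\mathrm{gr}U$ is generated by its primitives — i.e. exactly condition (4). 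So cosymmetry is equivalent to (4), completing the circle. I expect this last step to be the main obstacle: it requires matching our construction of $\chi_U$ and the standard-vs-coradical comparison against Kharchenko's linearization machinery, in particular verifying that $\mu_U$ on primitives is the identity under the identification $P(U) \cong V$ and that the image of $\mathcal{B}(V,c)$ under $\chi_U$ corresponds to the subalgebra of $T^c(P,c_P)$ generated by $P$ — essentially that $\chi_U$ composed with $\mu_U$ is the canonical inclusion $\mathcal{B}(V,c) \hookrightarrow T^c(V,c)$. Once that compatibility is nailed down, the equivalence $(2)\Leftrightarrow(4)$, and hence with everything else, follows.
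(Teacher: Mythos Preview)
Your proposal is correct, but your route differs from the paper's in how the work is distributed. The paper observes first that $U$ is a connected braided Hopf algebra (Remark \ref{rem:ConnIsHopf}) and then invokes \cite[Theorem 3.5]{Kharchenko- connected} as a single black box to obtain the equivalence of $(2)$, $(3)$, $(4)$, $(5)$ all at once; it then only needs to attach $(1)$ to this cycle, which it does via the short implications $(1)\Rightarrow(3)$ (using $P(\mathfrak{G}(U))=\omega_U(V)=U_{(1)}/U_{(0)}$ and Proposition \ref{pro:CoradicalFiltration}) and $(5)\Rightarrow(1)$ (using $\chi_U\Omega=\xi_U\vartheta_U$ to force $\xi_U$ bijective and then setting $\omega_U:=\xi_U^{-1}\chi_U$). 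You instead re-derive the equivalences $(1)\Leftrightarrow(3)\Leftrightarrow(4)\Leftrightarrow(5)$ by direct diagram chasing with $\vartheta_U$, $\xi_U$, $\chi_U$ and Proposition \ref{pro:CoradicalFiltration}, reserving Kharchenko's machinery only for the link with $(2)$. Your arguments for these implications are sound (the ``inductive'' step in $(4)\Rightarrow(3)$ is exactly condition $(5)$ of Proposition \ref{pro:CoradicalFiltration}, as you note). What you gain is a more self-contained treatment of the non-cosymmetry equivalences; what the paper gains is brevity, since Kharchenko's theorem already packages $(3)\Leftrightarrow(4)\Leftrightarrow(5)$ together with $(2)$, so there is no need to reprove them. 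For the genuinely substantive link involving $(2)$, both approaches ultimately rest on \cite{Kharchenko- connected}; your outlined argument that cosymmetry is equivalent to primitive generation of $\mathrm{gr}U$ via the linearization map is precisely the content of \cite[Theorem 3.5]{Kharchenko- connected}, so rather than redoing that compatibility check you may as well cite it directly, as the paper does.
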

\begin{proof}
By Remark \ref{rem:ConnIsHopf}, the primitively generated braided
bialgebra $U$ is indeed a braided Hopf algebra. Hence, the equivalence
between $\left(2\right),\left(3\right),\left(4\right)$ and $\left(5\right)$
follows by \cite[Theorem 3.5]{Kharchenko- connected}. 

Let $\left(U_{\left(n\right)}\right)_{n\in\mathbb{N}}$ be the standard
filtration on $U$. Let $\vartheta_{U}:T\left(V,c\right)\rightarrow\mathfrak{G}\left(U\right)$
be the graded braided bialgebra homomorphism of Proposition \ref{pro: induced filtration}.

$\left(1\right)\Rightarrow\left(3\right)$ By hypothesis, $\vartheta_{U}$
quotients to a braided bialgebra isomorphism $\omega_{U}:\mathcal{B}\left(V,c\right)\rightarrow\mathfrak{G}\left(U\right)$.
We have $P\left(\mathfrak{G}\left(U\right)\right)=\omega_{U}\left(P\left(\mathcal{B}\left(V,c\right)\right)\right)=\omega_{U}\left(V\right)=U_{\left(1\right)}/U_{\left(0\right)}$.
By Proposition \ref{pro:CoradicalFiltration}, we get that $U$ is
strictly generated by $V$.

$\left(5\right)\Rightarrow\left(1\right)$ By Proposition \ref{pro:omegaB},
we have $\chi_{U}\Omega=\xi_{U}\vartheta_{U}$. Since both $\chi_{U}$
and $\Omega$ are surjective, so is $\xi_{U}$. By Proposition \ref{pro:CoradicalFiltration},
$\xi_{U}$ is bijective. Set $\omega_{U}:=\left(\xi_{U}\right)^{-1}\chi_{U}$.\end{proof}
\begin{rem}
In Theorem \ref{thm:comb1PBW}, we gave a class of braided vector
spaces such that $U\left(V,c,b\right)$ is of PBW type whatever is
the bracket. It is still an open question whether the property of
$U\left(V,c,b\right)$ to be of PBW type depends just on the braided
vector space $\left(V,c\right)$ in general. In Example \ref{ex:Cosymm}
we will exhibit a braided Lie algebra $\left(V,c,b\right)$ such that
$U\left(V,c,b\right)$ is not of PBW type.\end{rem}

\begin{thm}
\label{thm:KhaCosym}Let $\left(V,c,b\right)$ be a braided Lie algebra
and set $U:=U\left(V,c,b\right)$. Assume $U^{\left[n\right]}$ is
cosymmetric for some $n\in\mathbb{N}$. Then $U^{\left[n+1\right]}$
is cosymmetric and $U\left(V,c,b\right)=U^{\left[n+1\right]}$. \end{thm}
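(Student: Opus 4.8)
The plan is to prove the two assertions together, exploiting the defining quotient $U^{[n+1]}=U^{[n]}/(\,(\mathrm{Id}-i^{[n]}b^{[n]})[P^{[n]}]\,)$ and the splitting $P^{[n]}=W^{[n]}\oplus V^{[n]}$ from Lemma \ref{lem:Psplits}, where $W^{[n]}=\mathrm{Ker}(b^{[n]})=\mathrm{Ker}(\pi_n^{n+1})\cap P^{[n]}$. First I would recall that cosymmetry of a connected braided bialgebra $C$ with infinitesimal part $(P,c_P)$ means $\mathrm{Im}(\mu_C)\subseteq\mathcal{B}(P,c_P)$ inside the quantum shuffle algebra $T^c(P,c_P)$, and that by \cite[Proposition 3.3]{Kharchenko- connected} the linearization map $\mu_C$ is a braided bialgebra map. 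The key observation is that $\pi_n^{n+1}\colon U^{[n]}\to U^{[n+1]}$ is a surjective braided bialgebra map whose kernel is generated as an ideal by the subspace $W^{[n]}\subseteq P^{[n]}$; since $W^{[n]}$ is, by Lemma \ref{lem:Psplits}, a \emph{braided} subspace complementing $V^{[n]}$ in $P^{[n]}$, the infinitesimal part of $U^{[n+1]}$ is naturally identified with $(V^{[n]},c_{V^{[n]}})\cong(V,c)$ — indeed $i^{[n+1]}=P(\pi_n^{n+1})\circ i^{[n]}$ realizes this isomorphism.

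Next I would set up the commutative square relating the linearization maps of $U^{[n]}$ and $U^{[n+1]}$. The map $\pi_n^{n+1}$ induces $\mathrm{gr}(\pi_n^{n+1})\colon\mathrm{gr}U^{[n]}\to\mathrm{gr}U^{[n+1]}$ on the associated graded of the coradical filtrations, and naturality of the linearization construction (it is built from $\phi^{\otimes m}\Delta^{m-1}$, which commutes with coalgebra maps) gives a commuting diagram
\[
\xymatrix@R=12pt{
\mathrm{gr}U^{[n]}\ar[r]^{\mu_{U^{[n]}}}\ar[d]_{\mathrm{gr}(\pi_n^{n+1})} & T^c(P^{[n]},c_{P^{[n]}})\ar[d]^{T^c(P(\pi_n^{n+1}))}\\
\mathrm{gr}U^{[n+1]}\ar[r]^{\mu_{U^{[n+1]}}} & T^c(V^{[n]},c_{V^{[n]}})
}
\]
where the right vertical map is the functorial map of cotensor coalgebras induced by $P(\pi_n^{n+1})\colon P^{[n]}\twoheadrightarrow P^{[n]}/W^{[n]}\cong V^{[n]}$. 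Because $W^{[n]}$ is a braided subspace, this projection of braided vector spaces carries $\mathcal{B}(P^{[n]},c_{P^{[n]}})$ into $\mathcal{B}(V^{[n]},c_{V^{[n]}})$ (the Nichols algebra is functorial in the braided vector space, being the image of $T(P,c)$ in $T^c(P,c_P)$). Now assuming $U^{[n]}$ is cosymmetric, $\mathrm{Im}(\mu_{U^{[n]}})\subseteq\mathcal{B}(P^{[n]},c_{P^{[n]}})$; chasing the diagram, $\mathrm{Im}(\mu_{U^{[n+1]}})\circ\mathrm{gr}(\pi_n^{n+1})$ lands in $\mathcal{B}(V^{[n]},c_{V^{[n]}})$, and since $\pi_n^{n+1}$ is surjective so is $\mathrm{gr}(\pi_n^{n+1})$, giving $\mathrm{Im}(\mu_{U^{[n+1]}})\subseteq\mathcal{B}(V^{[n]},c_{V^{[n]}})$, i.e. $U^{[n+1]}$ is cosymmetric.

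For the second claim, $U^{[n+1]}=U(V,c,b)$, I would show the projection $\pi_{n+1}^{n+2}\colon U^{[n+1]}\to U^{[n+2]}$ is an isomorphism, which by the direct-limit construction forces stabilization $U^{[n+1]}=U^{[\infty]}=U(V,c,b)$. The kernel of $\pi_{n+1}^{n+2}$ is generated by $W^{[n+1]}=\mathrm{Ker}(b^{[n+1]})\subseteq P^{[n+1]}$; so it suffices to prove $W^{[n+1]}=0$, equivalently $b^{[n+1]}$ is injective, equivalently $P(\pi_n^{n+1})\colon P^{[n]}\to P^{[n+1]}$ is already surjective onto $P^{[n+1]}$ after composition with the splitting — concretely, that $P^{[n+1]}=V^{[n+1]}$. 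Here I would invoke cosymmetry of $U^{[n+1]}$: by Theorem \ref{teo:PBW} (applied to $U^{[n+1]}$, once we know it equals its own universal enveloping algebra, or directly via \cite[Theorem 3.5]{Kharchenko- connected}) cosymmetry is equivalent to being strictly generated by its primitive part, hence to $\mathrm{gr}U^{[n+1]}=\mathcal{B}(V^{[n+1]},c)$ being primitively generated; combined with $b^{[n+1]}i^{[n+1]}=\mathrm{Id}$ (from \cite[Proposition 5.7]{Ardizzoni-MMPrim}), this pins down $W^{[n+1]}=0$. The main obstacle I anticipate is making the identification of infinitesimal parts and the functoriality of the Nichols/cotensor construction along $\pi_n^{n+1}$ fully rigorous — in particular verifying that $W^{[n]}$ being a braided subspace really yields the clean commuting diagram above and that no primitive elements of degree $>1$ sneak into $\mathrm{gr}U^{[n+1]}$ — and then correctly invoking the Kharchenko equivalences for $U^{[n+1]}$ before one formally knows it is a "universal enveloping algebra" in our sense.
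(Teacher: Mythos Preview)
Your commutative-diagram approach to cosymmetry of $U^{[n+1]}$ is close in spirit to what Kharchenko's Lemmata 4.2 and 4.4 (which the paper cites as a black box) actually prove, but there is a genuine gap in your setup: you place $T^c(V^{[n]},c_{V^{[n]}})$ in the bottom-right corner and assert that the infinitesimal part of $U^{[n+1]}$ is $(V^{[n]},c_{V^{[n]}})$. This is precisely the identity $P^{[n+1]}=V^{[n+1]}$ that you later say you need to \emph{prove}, so the diagram as written begs the question. The linearization map $\mu_{U^{[n+1]}}$ lands in $T^c(P^{[n+1]},c_{P^{[n+1]}})$, and a priori $P^{[n+1]}$ could be strictly larger than $V^{[n+1]}$; quotienting by an ideal generated by primitives can in principle create new primitives. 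If you correct the target to $T^c(P^{[n+1]})$ and use functoriality of the Nichols construction along the braided map $P(\pi_n^{n+1})\colon P^{[n]}\to P^{[n+1]}$, the diagram chase does yield cosymmetry of $U^{[n+1]}$, provided you also justify surjectivity of $\mathrm{gr}(\pi_n^{n+1})$ (which is not automatic from surjectivity of $\pi_n^{n+1}$ alone).

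The more serious gap is in the second step. You propose deducing $P^{[n+1]}=V^{[n+1]}$ from cosymmetry of $U^{[n+1]}$ via Theorem~\ref{teo:PBW} or \cite[Theorem 3.5]{Kharchenko- connected}, but those equivalences describe $\mathrm{gr}U^{[n+1]}$ as a Nichols algebra over $P^{[n+1]}$, not over $V^{[n+1]}$; they do not by themselves force $P^{[n+1]}=V^{[n+1]}$. You correctly flag this as an obstacle but do not resolve it. The paper's proof avoids the circularity by invoking \cite[Lemmata 4.2 and 4.4]{Kharchenko- connected} directly: those lemmata assert that when a cosymmetric connected braided Hopf algebra is divided by the ideal generated by a braided direct summand $W$ of its primitive part, the quotient is again cosymmetric \emph{and} its primitives are exactly the image of the original primitives. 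That second conclusion, $P^{[n+1]}=\pi_n^{n+1}(P^{[n]})$, is the missing ingredient; from it the computation $P^{[n+1]}=\pi_n^{n+1}(P^{[n]})=\pi_n^{n+1}i^{[n]}b^{[n]}(P^{[n]})=i^{[n+1]}b^{[n]}(P^{[n]})=V^{[n+1]}$ gives $W^{[n+1]}=0$ immediately.
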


\begin{proof}
By construction (cf. the proof of \cite[Proposition 4.5]{Ardizzoni-MMPrim}), we have that, for all $n\in\mathbb{N}$, $K^{[n]}:=\mathrm{Im}\left(\mathrm{Id}_{P^{\left[n\right]}}-i^{\left[n\right]}b^{\left[n\right]}\right)$ is a categorical subspace of $P^{\left[n\right]}$. For all $n\in\mathbb{N}$, set $W^{\left[n\right]}:=\mathrm{Ker}\left(\pi_{n}^{n+1}\right)\cap P^{\left[n\right]}$

By Lemma \ref{lem:Psplits}, $W^{\left[n\right]}=K^{[n]}$ and $P^{\left[n\right]}=W^{\left[n\right]}\oplus V^{\left[n\right]}$, for all  $n\in\mathbb{N}$. Assume now that $U^{\left[n\right]}$ is
cosymmetric for a fixed $n\in\mathbb{N}$.  By \cite[Lemmata 4.2 and 4.4]{Kharchenko- connected}, taking $W^{\left[n\right]}$ as $W$ and $V^{\left[n\right]}$ as $W'$,
we get that $U^{\left[n+1\right]}$ is cosymmetric and $P^{\left[n+1\right]}=\pi_{n}^{n+1}(P^{\left[n\right]})$.
Now\[
P^{\left[n+1\right]}=\pi_{n}^{n+1}(P^{\left[n\right]}) =\pi_{n}^{n+1}i^{\left[n\right]}b^{\left[n\right]}(P^{\left[n\right]}) =i^{\left[n+1\right]}b^{\left[n\right]}(P^{\left[n\right]}) =\mathrm{Im}(i^{\left[n+1\right]})=V^{\left[n+1\right]}.\]
Hence $W^{\left[n+1\right]}=0$. Now, by definition of $\pi_{n+1}^{n+2}$, we have  $\mathrm{Ker}\left(\pi_{n+1}^{n+2}\right)=\left(K^{[n+1]}\right)$ so that, by the foregoing, we get $\mathrm{Ker}\left(\pi_{n+1}^{n+2}\right)=\left(W^{\left[n+1\right]}\right)=0$. Therefore $\pi_{n+1}^{n+2}$ is bijective and $U^{\left[n+1\right]}=U^{\left[n+2\right]}$.
Thus $U\left(V,c,b\right)=\underrightarrow{\lim}U^{\left[i\right]}=U^{\left[n+1\right]}$.\end{proof}

\begin{cor}
\label{cor:KhaCosym}Let $\left(V,c\right)$ be a braided vector space.
If there is $n\in\mathbb{N}$ such that the symmetric algebra $S^{\left[n\right]}$
of rank $n$ is cosymmetric, then $\left(V,c\right)$ has combinatorial
rank at most $n+1$ in the sense of Definition \ref{def:SymAlg}.\end{cor}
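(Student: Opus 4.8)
The plan is to obtain the corollary as the immediate specialization of Theorem~\ref{thm:KhaCosym} to the trivial bracket. By Definition~\ref{def:SymAlg}, the braided vector space $(V,c)$ carries the trivial bracket $b_{tr}$, which makes $(V,c,b_{tr})$ a braided Lie algebra; moreover, writing $U := U(V,c,b_{tr})$, one has $U^{[m]} = S^{[m]}$ for every $m \in \mathbb{N}$ and $U = U^{[\infty]} = S^{[\infty]} = \mathcal{B}(V,c)$. Thus the hypothesis that the symmetric algebra $S^{[n]}$ of rank $n$ is cosymmetric is literally the statement that $U^{[n]}$ is cosymmetric.

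First I would apply Theorem~\ref{thm:KhaCosym} with this choice $b = b_{tr}$ and with the given $n$. It yields that $U^{[n+1]}$ is cosymmetric and, more importantly for us, that $U(V,c,b_{tr}) = U^{[n+1]}$. Translating this equality through the identifications above, it says precisely that $S^{[n+1]} = S^{[\infty]}$.

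Finally I would read off the conclusion from the definition of combinatorial rank in Definition~\ref{def:SymAlg}: since $S^{[n+1]} = S^{[\infty]}$, there is an index (namely $n+1$) at which the tower $(S^{[m]})_{m\in\mathbb{N}}$ of symmetric algebras reaches $S^{[\infty]}$, so the least such index exists and is at most $n+1$; this is exactly the assertion that $(V,c)$ has combinatorial rank at most $n+1$. I do not expect any genuine obstacle here: the whole content of the corollary lies in Theorem~\ref{thm:KhaCosym}, and the only step requiring a little care is the bookkeeping that matches the tower $(U^{[m]})_{m\in\mathbb{N}}$ built from $b_{tr}$ with the tower $(S^{[m]})_{m\in\mathbb{N}}$ of symmetric algebras, ensuring that ``$S^{[n]}$ cosymmetric'' and ``$U^{[n]}$ cosymmetric'' are the same hypothesis.
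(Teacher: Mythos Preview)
Your proof is correct and follows essentially the same approach as the paper: specialize Theorem~\ref{thm:KhaCosym} to the trivial bracket $b_{tr}$, identify the tower $(U^{[m]})_{m\in\mathbb{N}}$ with $(S^{[m]})_{m\in\mathbb{N}}$, and read off $S^{[n+1]}=S^{[\infty]}=\mathcal{B}(V,c)$.
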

\begin{proof}
Set $U:=U\left(V,c,b_{tr}\right)$ where $b_{tr}$ is the trivial
bracket on $(V,c)$. Note that $U^{\left[t\right]}=S^{[t]}$ is the
symmetric algebra of rank $t$ for all $t\in\mathbb{N}$. Hence, by
Theorem \ref{thm:KhaCosym}, we have that $U^{\left[n+1\right]}$
is cosymmetric and $\mathcal{B}\left(V,c\right)=U\left(V,c,b_{tr}\right)=U^{\left[n+1\right]}$.
This means $\left(V,c\right)$ has combinatorial at most $n+1$.\end{proof}
\begin{example}
\label{ex:Cosymm}At the end of \cite{Kharchenko-SkewPrim}, an example
of a two-dimensional braided vector space $\left(V,c\right)$ of combinatorial
rank $2$ is given. The braiding $c$ is of diagonal type of the form
$c\left(x_{i}\otimes x_{j}\right)=q_{i,j}x_{j}\otimes x_{i}$, $1\leq i,j\leq2$,
where $x_{1},x_{2}$ forms a basis of $V$ over $K$, $q_{1,2}=1\neq-1$
and $q_{i,j}=-1$ for all $\left(i,j\right)\neq\left(1,2\right)$.
By Corollary \ref{cor:KhaCosym}, applied to the case $n=0$, $T:=T\left(V,c\right)$
is not cosymmetric.

Since $T$ is a primitively generated braided bialgebra, by Remark
\ref{rem:Milnor-Moore}, we know that $T$ coincides with the universal
enveloping algebra of its infinitesimal braided Lie algebra $\left(P,c_{P},b_{P}\right)$.
We have so exhibited a braided Lie algebra $\left(P,c_{P},b_{P}\right)$
such that $U\left(P,c_{P},b_{P}\right)$ is not cosymmetric. By Theorem
\ref{teo:PBW}, $U\left(P,c_{P},b_{P}\right)$ is not of PBW type.
\end{example}

\section{Lifting of Nichols algebras \label{sec:Lifting}}

In this section we investigate braided bialgebra liftings of Nichols
algebras. We will characterize them in terms of universal enveloping
algebras of PBW type.
\begin{defn}
Let $\left(V,c\right)$ be a braided vector space. We will say that
a braided bialgebra $B$ \textbf{is a lifting of $\mathcal{B}\left(V,c\right)$}
if there is a graded braided bialgebra isomorphism $\chi_{B}:\mathcal{B}\left(V,c\right)\rightarrow\mathrm{gr}B$. \end{defn}
\begin{thm}
\label{thm:Lifting}Let $\left(V,c\right)$ be a braided vector space
and let $B$ be a braided bialgebra. The following assertions are
equivalent.
\begin{enumerate}
\item $B$ is a lifting of $\mathcal{B}\left(V,c\right)$.
\item There is a bracket $b$ on $(V,c)$ such that $(V,c,b)$ is a braided
Lie algebra, $U(V,c,b)$ is of PBW type and $B\cong U(V,c,b)$ as
braided bialgebras.
\end{enumerate}
\end{thm}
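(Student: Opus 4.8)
The plan is to prove the two implications separately, using the machinery already set up.

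For $(2)\Rightarrow(1)$, assume $(V,c,b)$ is a braided Lie algebra with $U:=U(V,c,b)$ of PBW type and $B\cong U(V,c,b)$. Since being a lifting of $\mathcal{B}(V,c)$ is clearly invariant under braided bialgebra isomorphism, it suffices to exhibit a graded braided bialgebra isomorphism $\chi_U:\mathcal{B}(V,c)\to\mathrm{gr}U$. But this is exactly the content of Theorem \ref{teo:PBW}: the equivalence $(1)\Leftrightarrow(5)$ there says that $U$ is of PBW type if and only if the map $\chi_U:\mathcal{B}(V,c)\to\mathrm{gr}U$ of Proposition \ref{pro:omegaB} is bijective. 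Since $\chi_U$ is always a graded braided bialgebra homomorphism, bijectivity makes it an isomorphism, so $B\cong U$ is a lifting of $\mathcal{B}(V,c)$.

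For $(1)\Rightarrow(2)$, suppose $\chi_B:\mathcal{B}(V,c)\to\mathrm{gr}B$ is a graded braided bialgebra isomorphism. First I would observe that $\mathrm{gr}B$ is connected (it is a graded coalgebra whose degree-zero part is $K$, since $\mathcal{B}(V,c)^0=K$), so $B$ is connected; being connected, by Remark \ref{rem:ConnIsHopf} it is a braided Hopf algebra, and moreover it is primitively generated. Indeed $\mathrm{gr}B$, being isomorphic to the Nichols algebra $\mathcal{B}(V,c)$, is generated in degree one, hence primitively generated (degree-one elements of $\mathrm{gr}B$ are primitive), and a standard argument on the coradical filtration lifts this to primitive generation of $B$ itself. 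By Remark \ref{rem:Milnor-Moore}(2), $B\cong U(P,c_P,b_P)$ as braided bialgebras, where $(P,c_P,b_P)$ is the infinitesimal braided Lie algebra of $B$. It remains to identify $(P,c_P)$ with $(V,c)$ and to check that $U(P,c_P,b_P)$ is of PBW type. For the identification: by Remark \ref{rem:strictisprimgen}, $P(\mathrm{gr}B)=\mathrm{gr}^1B$, and $\chi_B$ restricts to an isomorphism of braided vector spaces $P(\mathcal{B}(V,c))\to P(\mathrm{gr}B)$; since $P(\mathcal{B}(V,c))$ identifies with $(V,c)$ (as recalled in the proof of Proposition \ref{pro:omegaB}) and, for a connected braided bialgebra, $P(\mathrm{gr}B)\cong B_1/B_0\cong P(B)=(P,c_P)$ as braided vector spaces, we obtain an isomorphism of braided vector spaces $(V,c)\cong(P,c_P)$. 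Transporting the bracket $b_P$ along this isomorphism yields a bracket $b$ on $(V,c)$ with $(V,c,b)$ a braided Lie algebra isomorphic to $(P,c_P,b_P)$, hence $B\cong U(V,c,b)$. Finally, to see $U(V,c,b)$ is of PBW type, I would again invoke Theorem \ref{teo:PBW}: it suffices to check that $\mathrm{gr}U$ is primitively generated, which holds because $\mathrm{gr}U\cong\mathrm{gr}B\cong\mathcal{B}(V,c)$ is generated in degree one; alternatively, one checks condition $(5)$ of Theorem \ref{teo:PBW}, namely that $\chi_U$ is bijective, by comparing it with the given isomorphism $\chi_B$ under the identifications above.

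The main obstacle I anticipate is the careful bookkeeping in $(1)\Rightarrow(2)$: one must make sure that the isomorphism $(V,c)\cong(P,c_P)$ of braided vector spaces is compatible with all the structure, so that transporting $b_P$ genuinely produces a \emph{bracket} in the sense of Definition \ref{def:Lie} (not merely a map) and that the resulting $U(V,c,b)$ is literally isomorphic, as a braided bialgebra, to $B$. Here the key point is that the construction of the universal enveloping algebra in Definition \ref{def:Lie} is functorial in the braided Lie algebra, so an isomorphism $(V,c,b)\cong(P,c_P,b_P)$ of braided Lie algebras induces $U(V,c,b)\cong U(P,c_P,b_P)\cong B$. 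Once this is in place, the PBW-type assertion is immediate from Theorem \ref{teo:PBW}, since $\mathrm{gr}U\cong\mathrm{gr}B$ is visibly primitively generated.
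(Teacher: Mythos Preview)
Your proof is correct and follows essentially the same approach as the paper: both directions hinge on Theorem \ref{teo:PBW}, and for $(1)\Rightarrow(2)$ you, like the paper, pass through the Milnor--Moore type description $B\cong U(P,c_P,b_P)$ and transport the bracket along the braided-vector-space isomorphism $(V,c)\cong(P,c_P)$ induced by $\chi_B$ on primitives. The only minor difference is that the paper verifies PBW type by showing the specific map $\chi_U$ is bijective via the commutative triangle $\mathrm{gr}\varphi\circ\chi_U=\chi_B$ (condition (5) of Theorem \ref{teo:PBW}), whereas you invoke condition (4) by noting $\mathrm{gr}U\cong\mathrm{gr}B\cong\mathcal{B}(V,c)$ is primitively generated---an equally valid and slightly quicker route.
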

\begin{proof}
$(1)\Rightarrow\left(2\right)$ By hypothesis, there is a braided
bialgebra isomorphism $\chi_{B}:\mathcal{B}\left(V,c\right)\rightarrow\mathrm{gr}B$.
Since $\mathcal{B}\left(V,c\right)$ is primitively generated, the
same holds for $\mathrm{gr}B$ whence for $B$. By Remark \ref{rem:Milnor-Moore},
$B\cong U\left(P,c_{P},b_{P}\right)$, where $\left(P,c_{P},b_{P}\right)$
is the infinitesimal braided Lie algebra of $B$. Clearly $\chi_{B}$
preserves primitive elements so that it induces an isomorphism of
braided vector spaces $\gamma:\left(V,c\right)\rightarrow\left(P,c_{P}\right)$.
Since $\gamma$ is bijective, there is a bracket $b$ on $\left(V,c\right)$
such that $\left(V,c,b\right)$ is a braided Lie algebra and $U\left(V,c,b\right)\cong U\left(P,c_{P},b_{P}\right)\cong B$.
Set $U:=U\left(V,c,b\right)$ and denote by $\varphi:U\rightarrow B$
this isomorphism. Then $\varphi_{\mid V}=\gamma$. Thus one easily
checks that the diagram\[
\xymatrix@R=10pt{\mathrm{gr}U\ar[rr]^{\mathrm{gr}\varphi} &  & \mathrm{gr}B\\
 & \mathcal{B}(V,c)\ar[ul]^{\chi_{U}}\ar[ur]_{\chi_{B}}}
\]
commutes. Since both $\chi_{B}$ and $\varphi$ are isomorphisms,
we get that $\chi_{U}$ is an isomorphism too. By Theorem \ref{teo:PBW},
we get that $U$ is of PBW type.

$(2)\Rightarrow\left(1\right)$ Since $U:=U\left(V,c,b\right)$ is
of PBW type, by Theorem \ref{teo:PBW} we have that the map $\chi_{U}:\mathcal{B}\left(V,c\right)\rightarrow\mathrm{gr}U$
of Proposition \ref{pro:omegaB} is bijective. Hence $U$ is a lifting
of $\mathcal{B}\left(V,c\right)$.
\end{proof}
In view of Theorem \ref{thm:Lifting}, given a braided vector space
$\left(V,c\right)$, studying braided bialgebra liftings of $\mathcal{B}\left(V,c\right)$
amounts to investigate braided brackets $b$ on $\left(V,c\right)$
such that $\left(V,c,b\right)$ is a braided Lie algebra and $U\left(V,c,b\right)$
is of PBW type. 
\begin{cor}
\label{cor:Lifting}Let $\left(V,c\right)$ be a braided vector space
of combinatorial rank at most one and let $B$ be a braided bialgebra.
The following assertions are equivalent.
\begin{enumerate}
\item $B$ is a lifting of $\mathcal{B}\left(V,c\right)$.
\item There is a bracket $b$ on $(V,c)$ such that $(V,c,b)$ is a braided
Lie algebra and $B\cong U(V,c,b)$ as braided bialgebras.
\end{enumerate}
\end{cor}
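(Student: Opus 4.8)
The plan is to deduce Corollary~\ref{cor:Lifting} directly from Theorem~\ref{thm:Lifting} by showing that, under the combinatorial rank hypothesis, the two formulations of condition (2) coincide. Concretely, Theorem~\ref{thm:Lifting} already gives the equivalence of ``$B$ is a lifting of $\mathcal{B}(V,c)$'' with the statement ``there is a bracket $b$ on $(V,c)$ making $(V,c,b)$ a braided Lie algebra, $U(V,c,b)$ of PBW type, and $B\cong U(V,c,b)$.'' So the only thing to verify is that the extra clause ``$U(V,c,b)$ is of PBW type'' is automatic here, i.e. that condition (2) of the corollary already forces it.

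First I would invoke Theorem~\ref{thm:comb1PBW}: if $(V,c)$ has combinatorial rank at most one and $(V,c,b)$ is any braided Lie algebra on $(V,c)$, then $U(V,c,b)$ is automatically of PBW type. Hence for braided vector spaces of combinatorial rank at most one, the clause ``$U(V,c,b)$ is of PBW type'' in part (2) of Theorem~\ref{thm:Lifting} is vacuous, and part (2) of Theorem~\ref{thm:Lifting} becomes verbatim part (2) of the corollary.

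So the proof is essentially one paragraph: assume $(V,c)$ has combinatorial rank at most one and let $B$ be a braided bialgebra. For $(1)\Rightarrow(2)$, apply Theorem~\ref{thm:Lifting}$(1)\Rightarrow(2)$ to obtain a bracket $b$ with $(V,c,b)$ a braided Lie algebra and $B\cong U(V,c,b)$; drop the (now automatic) PBW-type clause. For $(2)\Rightarrow(1)$, given such a bracket $b$, Theorem~\ref{thm:comb1PBW} tells us $U(V,c,b)$ is of PBW type, so the hypotheses of Theorem~\ref{thm:Lifting}$(2)\Rightarrow(1)$ are met and $B\cong U(V,c,b)$ is a lifting of $\mathcal{B}(V,c)$.

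I do not anticipate any real obstacle: this is a routine specialization argument, and the only subtlety is making sure the combinatorial-rank hypothesis is being applied to the right braided vector space (namely $(V,c)$ itself, which is exactly what appears in both Theorem~\ref{thm:comb1PBW} and the corollary). In spirit this is the same bookkeeping as in Example~\ref{ex:ClassicLie} and Example~\ref{exa:RestrictedLie}, where the PBW-type conclusion was extracted from Theorem~\ref{thm:comb1PBW} once combinatorial rank at most one had been established.

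\begin{proof}
This follows immediately from Theorem~\ref{thm:Lifting} together with Theorem~\ref{thm:comb1PBW}. Indeed, assume $(V,c)$ has combinatorial rank at most one.

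$(1)\Rightarrow(2)$ By Theorem~\ref{thm:Lifting}, there is a bracket $b$ on $(V,c)$ such that $(V,c,b)$ is a braided Lie algebra and $B\cong U(V,c,b)$ as braided bialgebras.

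$(2)\Rightarrow(1)$ Let $b$ be a bracket on $(V,c)$ such that $(V,c,b)$ is a braided Lie algebra and $B\cong U(V,c,b)$. Since $(V,c)$ has combinatorial rank at most one, Theorem~\ref{thm:comb1PBW} ensures that $U(V,c,b)$ is of PBW type. Hence, by Theorem~\ref{thm:Lifting}, $B\cong U(V,c,b)$ is a lifting of $\mathcal{B}(V,c)$.
\end{proof}
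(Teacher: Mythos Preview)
Your proof is correct and follows exactly the paper's approach: the paper's proof reads simply ``It follows by Theorem~\ref{thm:Lifting} and Theorem~\ref{thm:comb1PBW},'' and you have merely spelled out that one-line argument in detail.
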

\begin{proof}
It follows by Theorem \ref{thm:Lifting} and Theorem \ref{thm:comb1PBW}.
\end{proof}

\noindent \textbf{Acknowledgements.} We would like to thank the referee for
several useful suggestions that improved an earlier version of this paper.

\end{document}